  \theoremstyle{definition}
  \newtheorem{definition}{Definition}[section]
  \newtheorem{notation}[definition]{Notation}
  \newtheorem{remark}[definition]{Remark}
  \theoremstyle{plain}
  \newtheorem{lemma}[definition]{Lemma}
  \newtheorem{proposition}[definition]{Proposition}
  \newtheorem{theorem}[definition]{Theorem}
  \newtheorem{corollary}[definition]{Corollary}
  \newtheorem*{conjecture}{Conjecture}
\begin{document}

\title[Universal associative envelopes]
{Universal associative envelopes of $(n{+}1)$-dimensional $n$-Lie algebras}

\author[Bremner]{Murray R. Bremner}

\address{Department of Mathematics and Statistics, University of Saskatchewan,
Canada}

\email{bremner@math.usask.ca}

\author[Elgendy]{Hader A. Elgendy}

\address{Department of Mathematics and Statistics, University of Saskatchewan,
Canada}

\email{hae431@mail.usask.ca}

\date{\textit{\today}}

\begin{abstract}
For $n$ even, we prove Pozhidaev's conjecture on the existence of associative
enveloping algebras for simple $n$-Lie algebras. More generally, for $n$ even
and any $(n{+}1)$-dimensional $n$-Lie algebra $L$, we construct a universal
associative enveloping algebra $U(L)$ and show that the natural map $L \to
U(L)$ is injective. We use noncommutative Gr\"obner bases to present $U(L)$ as
a quotient of the free associative algebra on a basis of $L$ and to obtain a
monomial basis of $U(L)$. In the last section, we provide computational
evidence that the construction of $U(L)$ is much more difficult for $n$ odd.
\end{abstract}

\subjclass[2000]{Primary 17A42. Secondary 13P10, 16S30, 17B35.}

\keywords{$n$-Lie algebras, universal associative enveloping algebras, free
associative algebras, noncommutative Gr\"obner bases, representation theory.}

\maketitle

%%%%%%%%%%%%%%%%%%%%%%%%%%%%%%%%%%%%%%%%%%%%%%%%%%%%%%%%%%%%%%%%%%%%%%%%

\section{Introduction}

Filippov \cite{Filippov} in 1985 introduced $n$-Lie algebras and classified the
$(n{+}1)$-dimen\-sional $n$-Lie algebras over an algebraically closed field of
characteristic 0.

\begin{definition} \cite{Filippov}
An \emph{$n$-Lie algebra} is a vector space $L$ over a field $F$ of
characteristic $\ne 2$ with a multilinear operation $[x_1, x_2, \dots, x_n]$
satisfying the \emph{alternating (or anticommutative) identity} and the
\emph{generalized Jacobi (or derivation) identity}:
  \begin{align*}
  [ x_1, x_2, \dots, x_n ]
  &=
  \epsilon(\sigma)
  [ x_{\sigma(1)}, x_{\sigma(2)}, \dots, x_{\sigma(n)} ]
  \quad
  (\sigma \in S_n),
  \\
  [ [ x_1, \dots, x_n ], y_2, \dots, y_n ]
  &=
  \sum^n_{i=1}
  [ x_1, \dots, [ x_i, y_2, \dots, y_n ], \dots, x_n ].
  \end{align*}
\end{definition}

For $n = 2$ we obtain the definition of a Lie algebra, but for $n \ge 3$ the
structure of $n$-Lie algebras is quite different. In particular, Ling
\cite{Ling} showed that for each $n \ge 3$ there exists up to isomorphism a
unique simple finite-dimensional $n$-Lie algebra over an algebraically closed
field of characterstic 0.

\begin{definition} \label{simpledefinition} \cite{Filippov}
Let $n \ge 3$ and let $F$ be a field of characteristic $\ne 2$. Let $L_{n+1}$
be the $(n{+}1)$-dimensional $n$-Lie algebra over $F$ with basis $e_1, \dots,
e_{n+1}$ such that
  \[
  [ e_1, \dots, \widehat{e}_i, \dots, e_{n+1} ]
  =
  (-1)^{n+i+1} e_i,
  \quad
  (1 \le i \le n{+}1);
  \]
$\widehat{e}_i$ means that $e_i$ is omitted.
Filippov \cite[Theorem 4]{Filippov} shows that $L_{n+1}$ is simple.
\end{definition}

Alternating $n$-ary structures have attracted attention in theoretical physics
during the last few decades. In particular, the important recent work by Bagger
and Lambert \cite{BaggerLambert} and Gustavsson \cite{Gustavsson} attempts to
describe an effective action for the low energy dynamics of coincident
M2-branes. For a very recent comprehensive survey on the physical applications
of $n$-ary algebras, see de Azc\'arraga and Izquierdo
\cite{deAzcarragaIzquierdo}.

The Poincar\'e-Birkhoff-Witt (PBW) theorem is an important tool in the
representation theory of Lie algebras. It provides a basis for the universal
associative enveloping algebra of any Lie algebra over any field, and allows us
to make calculations in these noncommutative algebras. Bergman \cite{Bergman}
in 1978 gave a new proof of the PBW theorem using noncommutative Gr\"obner
bases in the free associative algebra. This theory was used recently by Casas
et al.~\cite{CasasInsuaLadra} and Insua and Ladra \cite{InsuaLadra} to
construct universal enveloping algebras of Leibniz and $n$-Leibniz algebras.
Pozhidaev \cite{Pozhidaev} in 2003 showed that for $n \le 5$ the simple
finite-dimensional $n$-Lie algebra over an algebraically closed field of
characteristic $0$ can be embedded in an associative algebra, and made the
conjecture that such associative enveloping algebras exist for all $n$.

The aim of the present paper is to use noncommutative Gr\"obner bases to study
the universal associative enveloping algebras of $n$-Lie algebras and to
establish a generalization of the PBW theorem for $(n{+}1)$-dimensional $n$-Lie
algebras when $n$ is even. In Section \ref{prelimsection} we recall basic facts
about $n$-Lie algebras and noncommutative Gr\"obner bases. In Section
\ref{alternatingsection} we prove a theorem on the normal form of a composition
of ideal generators for universal associative  enveloping algebras of
$(n{+}1)$-dimensional alternating $n$-ary algebras. For $n$ even, this allows
us to construct a basis for $U(L)$ where $L$ is any $(n{+}1)$-dimensional
$n$-Lie algebra. In Section \ref{simplesection} we establish Pozhidaev's
conjecture for the simple $n$-Lie algebra when $n$ is even. In Section
\ref{nonsimplesection} we establish analogous results for the non-simple
$(n{+}1)$-dimensional $n$-Lie algebras. Finally, in Section \ref{oddsection} we
describe some calculations with the computer algebra system Maple which suggest
that extending these results to $n$ odd may be difficult.

Unless otherwise stated, we assume throughout that all vector spaces are over
an algebraically closed field $F$ of characteristic 0.

%%%%%%%%%%%%%%%%%%%%%%%%%%%%%%%%%%%%%%%%%%%%%%%%%%%%%%%%%%%%%%%%%%%%%%%%

\section{Preliminaries} \label{prelimsection}

We first recall Filippov's classification of $(n{+}1)$-dimensional $n$-Lie
algebras. If $L$ is an $n$-Lie algebra then $L^1$ is its derived algebra and
$Z(L)$ is its center.

\begin{theorem} \label{classf} \cite{Filippov}
Let $n \ge 3$ and let $L$ be an $(n{+}1)$-dimensional $n$-Lie algebra with
basis $e_1, e_2, \dots, e_{n+1}$ over $F$. Up to isomorphism, exactly one of
the following cases holds; omitted brackets are assumed to be zero:
  \begin{enumerate}
  \item[(0)]
  If $\dim L^1 = 0$ then $L$ is the Abelian $n$-Lie algebra.
  \item[(1)]
  If $\dim L^1 = 1$ then we write $L^1 = F e_1$ and we have two cases:
    \begin{enumerate}
    \item If $L^1 \subseteq Z(L)$ then $[ e_2, \dots, e_{n+1} ] = e_1$.
    \item If $L^1 \nsubseteq Z(L)$ then $[ e_1, \dots, e_n ] = e_1$.
    \end{enumerate}
  \item[(2)]
  If $\dim L^1 = 2$ then we write $L^1 = F e_1 \oplus F e_2$ and we have two cases:
    \begin{enumerate}
    \item $[ e_2, \dots, e_{n+1} ] = e_1$ and $[ e_1, e_3, \dots, e_{n+1} ] = e_2$.
    \item $[ e_2, \dots, e_{n+1}] = e_1 + \beta e_2$ for $\beta \in F \setminus \{0\}$ and
    $[ e_1, e_3, \dots, e_{n+1} ] = e_2$.
    \end{enumerate}
  \item[($r$)]
  If $\dim L^1 = r$ for $3 \le r \le n+1$ then we write $L^1 = F e_1 \oplus \dots \oplus F e_r$ and we have
  $[ e_1, \dots, \widehat{e}_i, \dots, e_{n+1} ] = e_i$ for $1 \le i \le r$.
  \end{enumerate}
\end{theorem}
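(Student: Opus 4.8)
This is Filippov's classification theorem; the approach I would take runs as follows. Since $\dim L = n{+}1$, an alternating $n$-linear bracket on $L$ is determined by its $n{+}1$ values obtained by deleting a single basis vector, so I would write
  \[
  [ e_1, \dots, \widehat{e}_i, \dots, e_{n+1} ]
  =
  (-1)^{n+i+1} \sum_{j=1}^{n+1} a_{ij} e_j
  \qquad (1 \le i \le n{+}1),
  \]
collect the coefficients into a matrix $A = (a_{ij})$ (the sign twist chosen so that the simple algebra $L_{n+1}$ of Definition \ref{simpledefinition} has $A$ the identity), and observe that conversely every matrix $A$ produces an alternating bracket via this formula together with multilinearity and the alternating law. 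The classification then reduces to describing those $A$ for which the generalized Jacobi identity holds, up to the induced action of $\mathrm{GL}(L)$ on $A$. My plan has three steps: (i) set up the dictionary between $A$ and the invariants $L^1$ and $Z(L)$; (ii) translate the Jacobi identity into explicit conditions on $A$; (iii) bring $A$ into normal form under base change.

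For step (i): the vector $[e_1, \dots, \widehat{e}_i, \dots, e_{n+1}]$ is a $\pm$-multiple of the $i$-th row of $A$, and every bracket of basis vectors is a $\pm$-multiple of some row, so $L^1$ is the row space of $A$ and $\dim L^1 = \operatorname{rank} A$. A short computation likewise expresses $Z(L)$ through explicit linear equations in the $a_{ij}$, so that when $\operatorname{rank} A = 1$ the alternative $L^1 \subseteq Z(L)$ versus $L^1 \nsubseteq Z(L)$ becomes a concrete dichotomy for $A$ --- roughly, whether its unique nonzero structure row is ``diagonal'' or ``nilpotent.''

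For step (ii): I would substitute basis vectors for every argument of the derivation identity, use the alternating law to discard all tuples having a repeated argument, and reduce to the tuples whose index supports jointly cover $\{1, \dots, n{+}1\}$ with a single overlap. Expanding the nested brackets and comparing coefficients of the $e_l$ then yields a finite system of quadratic (Bianchi-type) relations among the $a_{ij}$. I expect these relations, combined with the base-change freedom of step (iii), to leave only finitely many shapes of $A$ in each rank, and in particular to force $A$ symmetric once $\operatorname{rank} A \ge 3$; the hypothesis $n \ge 3$ --- enough ``spectator'' indices to form these instances of the identity --- enters precisely here. The delicate sign bookkeeping in the expansion, and the distillation of the quadratic system into a usable condition, is the main obstacle.

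For step (iii): a change of basis transforms $A$, up to a nonzero scalar, by a congruence $A \mapsto Q^{T} A Q$ with $Q \in \mathrm{GL}(L)$. If $\operatorname{rank} A = 0$ we get the Abelian algebra (case (0)). If $\operatorname{rank} A = r \ge 3$, then $A$ is symmetric by step (ii), so over the algebraically closed field $F$ I may diagonalize it and rescale the basis to recover the form displayed in case $(r)$; the analogous diagonal form with $r = 2$ is case $(2)(a)$ and with $r = 1$ is case $(1)(a)$. If $\operatorname{rank} A \in \{1,2\}$ but $A$ is not congruent to such a diagonal matrix, a nilpotent contribution survives; separating according to whether $L^1 \subseteq Z(L)$, and in the rank-$2$ situation rescaling to normalize the surviving off-diagonal entry to a parameter $\beta \ne 0$, yields exactly the remaining subcases $(1)(b)$ and $(2)(b)$. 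It remains to check that the normal forms attached to distinct cases are pairwise non-isomorphic; this follows from the invariants of $A$ --- its rank, whether it is congruent to a diagonal matrix, and for $r = 2$ the parameter $\beta$ up to the residual base-change identifications.
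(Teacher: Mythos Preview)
The paper does not actually prove this theorem: its entire proof is the sentence ``This is Filippov's classification \cite[Section 3]{Filippov} of $(n{+}1)$-dimensional $n$-Lie algebras in the simplified version of Bai and Song \cite[Theorem 3.1]{BaiSong}.'' So there is nothing to compare at the level of argument; the paper simply imports the result.

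Your sketch, by contrast, outlines the genuine proof, and the outline is essentially Filippov's own strategy: encode the alternating $n$-bracket on an $(n{+}1)$-dimensional space by an $(n{+}1)\times(n{+}1)$ matrix $A$, identify $L^1$ with the row space of $A$, translate the derivation identity into quadratic constraints on $A$, and then classify $A$ up to the congruence action (twisted by $\det$) coming from change of basis. The identification of step (ii) as the crux is accurate, and your expectation that for $\operatorname{rank} A \ge 3$ the Jacobi relations force $A$ into the congruence class of a symmetric matrix is the right target; this is exactly where the hypothesis $n\ge 3$ enters in Filippov's argument, via the availability of enough ``spectator'' indices. Two cautions worth recording for when you fill in the details: first, the quadratic system from step (ii) does \emph{not} force symmetry in low rank (non-symmetric matrices of rank $1$ and $2$ do yield $n$-Lie algebras, which is precisely why cases (1b) and (2b) exist), so the passage from ``Jacobi holds'' to ``$A$ is congruent to a symmetric matrix'' is genuinely a rank-$\ge 3$ phenomenon that needs its own argument; second, the statement that distinct $\beta$ in case (2b) give non-isomorphic algebras is not literally asserted in the theorem and in fact involves a residual identification that you correctly flag but will need to pin down. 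Modulo these acknowledged gaps, your plan is sound and tracks the original source more closely than the paper itself does.
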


\begin{proof}
This is Filippov's classification \cite[Section 3]{Filippov} of
$(n{+}1)$-dimensional $n$-Lie algebras in the simplified version of Bai and
Song \cite[Theorem 3.1]{BaiSong}.
\end{proof}

We next recall the basic definitions and results in the theory of
noncommutative Gr\"obner bases in free associative algebras following de Graaf
\cite[Chapter 6]{deGraaf} which is based on the work of Bergman \cite{Bergman}.

\begin{definition}
Let $X = \{ x_1, \dots, x_{n+1} \}$ be a set of symbols with the total order
$x_i < x_j$ if and only if $i < j$. The \emph{free monoid} generated by $X$ is
the set $X^\ast$ of all (possibly empty) words $w = x_{i_1} \cdots x_{i_k}$ ($k
\ge 0$) with the (associative) operation of concatenation. For $w = x_{i_1}
\cdots x_{i_k} \in X^*$ the \emph{degree} is $\deg(w) = k$. The \emph{free
unital associative algebra} generated by $X$ is the vector space $F \langle X
\rangle$ with basis $X^*$ and multiplication extended bilinearly from
concatenation in $X^\ast$.
\end{definition}

\begin{definition}
Throughout this paper we use the \emph{degree-lexicographical} (\emph{deglex})
order $<$ on $X^\ast$ defined as follows: $u < v$ if and only if either ($i$)
$\deg(u) < \deg(v)$ or ($ii$) $\deg(u) = \deg(v)$ and $u = w x_i u'$, $v = w
x_j v'$ where $x_i < x_j$ ($w, u', v' \in X^\ast$). We say that $u \in X^*$ is
a \emph{factor} of $v \in X^*$ if there exist $w_1, w_2 \in X^*$ such that $w_1
u w_2 = v$. If $w_1$ (resp.~$w_2$) is empty then $u$ is a \emph{left}
(resp.~\emph{right}) factor of $v$.
\end{definition}

\begin{definition}
The \emph{support} of a noncommutative polynomial $f \in F\langle X \rangle$ is
the set of all monomials $w \in X^\ast$ that occur in $f$ with nonzero
coefficient. The \emph{leading monomial} of $f \in F\langle X\rangle$, denoted
$\mathrm{LM}(f)$, is the highest element of the support of $f$ with respect to
deglex order. If $I$ is any ideal of $F\langle X \rangle$ then the set of
\emph{normal words} modulo $I$ is defined by $N(I) = \{ \, u \in X^* \mid
\text{$u \ne \mathrm{LM}(f)$ for any $f \in I$} \, \}$. We write $C(I)$ for the
subspace of $F\langle X \rangle$ spanned by $N(I)$.
\end{definition}

\begin{proposition} \label{C(I)proposition}
If $I \subseteq F\langle  X\rangle$ is an ideal then $F\langle X \rangle = C(I)
\oplus I$.
\end{proposition}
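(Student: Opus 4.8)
The plan is to establish the two halves of the direct sum separately: the intersection $C(I) \cap I = 0$ is essentially a one-line consequence of the definition of normal words, while the sum $C(I) + I = F\langle X \rangle$ is proved by a well-founded induction over monomials, exactly as in Bergman's approach to the PBW theorem.

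First I would dispose of $C(I) \cap I = 0$. Suppose $f$ is a nonzero element of this intersection. Since $f \in C(I)$, every monomial in the support of $f$ — in particular $\mathrm{LM}(f)$ — is a normal word, i.e.\ lies in $N(I)$. On the other hand $f \in I$, so by the very definition of $N(I)$ the monomial $\mathrm{LM}(f)$ is \emph{not} normal. This contradiction forces $f = 0$, so the sum is direct.

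The substantive part is $C(I) + I = F\langle X \rangle$. Here I would first record that deglex is a well-order on $X^\ast$: since $X$ is finite there are only finitely many words of each degree, so any nonempty subset of $X^\ast$ contains an element of least degree and, among those, a lexicographically least one, which is its minimum; in fact every monomial has only finitely many predecessors, so the induction below is an ordinary induction. I would then show by induction on $w \in X^\ast$ with respect to deglex that $w \in C(I) + I$; since $X^\ast$ spans $F\langle X \rangle$, this suffices. If $w$ is a normal word, then $w \in N(I) \subseteq C(I)$ and there is nothing to prove. Otherwise $w = \mathrm{LM}(g)$ for some $g \in I$; dividing $g$ by the (nonzero) coefficient of $w$ in $g$, we may assume this coefficient equals $1$, so that $w - g$ is a linear combination of monomials strictly below $w$ in deglex order. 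By the induction hypothesis each such monomial lies in $C(I) + I$, hence so does $w - g$, and adding $g \in I$ gives $w = (w - g) + g \in C(I) + I$.

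Combining the two parts yields $F\langle X \rangle = C(I) \oplus I$. The only points requiring any care are the legitimacy of the induction — that deglex is genuinely well-founded on $X^\ast$ — and the bookkeeping that subtracting off the normalized leading term of an ideal element strictly lowers all surviving monomials; both are straightforward because the alphabet is finite, so I do not anticipate a real obstacle.
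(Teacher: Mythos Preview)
Your argument is correct and is the standard one. The paper itself gives no proof here at all: it simply cites de Graaf \cite[Proposition 6.1.1]{deGraaf}, so you have in fact supplied a self-contained argument where the paper only points to a reference. The two ingredients you use---that a nonzero element of $C(I)\cap I$ would have a leading monomial that is simultaneously normal and not normal, and the deglex induction reducing a non-normal word $w$ via any $g\in I$ with $\mathrm{LM}(g)=w$---are exactly what underlies the cited result.
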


\begin{proof}
de Graaf \cite[Proposition 6.1.1]{deGraaf}.
\end{proof}

\begin{definition}
Let $G \subseteq F\langle X \rangle$ be a subset generating an ideal $I
\subseteq F\langle X \rangle$. A noncommutative polynomial $f \in F\langle X
\rangle$ is in \emph{normal form modulo} $G$ if no monomial occurring in $f$
has a factor of the form $\mathrm{LM}(g)$ for any $g \in G$. For an algorithm
which calculates the normal form, see \cite[\S 6.1]{deGraaf}.
\end{definition}

\begin{definition}
If $I \subseteq F\langle X \rangle$ is an ideal then a subset $G \subseteq I$
is a \emph{Gr\"obner basis} of $I$ if for all $f \in I$ there is a $g \in G$
such that $\mathrm{LM}(g)$ is a factor of $\mathrm{LM}(f)$.
\end{definition}

\begin{definition}
A subset $G \subseteq F\langle X \rangle$ is \emph{self-reduced} if every $g
\in G$ is in normal form modulo $G \setminus \{g\}$ and every $g \in G$ is
\emph{monic}: the coefficient of $\mathrm{LM}(g)$ is 1. (This definition is
stronger than \cite[Definition 6.1.5]{deGraaf}.)
\end{definition}

\begin{definition}
Let $g, h \in F \langle X \rangle$ be two monic noncommutative polynomials.
Assume that $\mathrm{LM}(g)$ is not a factor of $\mathrm{LM}(h)$ and that
$\mathrm{LM}(h)$ is not a factor of $\mathrm{LM}(g)$. Let $u, v \in X^\ast$ be
such that
  \begin{enumerate}
  \item[($i$)] $\mathrm{LM}(g)\,u = v\,\mathrm{LM}(h)$,
  \item[($ii$)] $u$ is a proper right factor of $\mathrm{LM}(h)$,
  \item[($iii$)] $v$ is a proper left factor of $\mathrm{LM}(g)$.
  \end{enumerate}
In this case the element $g u - v h \in F \langle X \rangle$ is called a
\emph{composition} of $g$ and $h$.
\end{definition}

\begin{theorem} \label{di}
If $I \subseteq F\langle X \rangle$ is an ideal generated by a self-reduced set
$G$, then $G$ is a Gr\"obner basis of $I$ if and only if for all compositions
$f$ of the elements of $G$ the normal form of $f$ modulo $G$ is zero.
\end{theorem}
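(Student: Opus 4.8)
The plan is to follow the classical proof of Bergman's composition (``diamond'') lemma. The forward implication is immediate: if $G$ is a Gr\"obner basis and $f = gu - vh$ is a composition of $g,h \in G$, then $f \in I$ because $I$ is an ideal, so any normal form $r$ of $f$ modulo $G$ again lies in $I$ (it differs from $f$ by an element of $I$); were $r \ne 0$, its leading monomial would have a factor $\mathrm{LM}(g')$ for some $g' \in G$ since $G$ is a Gr\"obner basis, contradicting that $r$ is in normal form. Hence $r = 0$.

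For the converse I would reduce everything to the claim: \emph{if $f \in I$ is in normal form modulo $G$ then $f = 0$}. Assuming the claim, let $W \subseteq X^*$ be the set of words having no factor $\mathrm{LM}(g)$, $g \in G$. If $u = w_1\mathrm{LM}(g)w_2$ then $u = \mathrm{LM}(w_1 g w_2)$, so $u \notin N(I)$; thus $N(I) \subseteq W$ and $C(I) \subseteq \operatorname{span}(W)$. The normal-form algorithm gives $F\langle X\rangle = \operatorname{span}(W) + I$, while the claim gives $\operatorname{span}(W) \cap I = 0$, so $F\langle X\rangle = \operatorname{span}(W) \oplus I$; a subspace that contains the complement $C(I)$ of Proposition~\ref{C(I)proposition} and is itself complementary to $I$ must coincide with $C(I)$, whence $\operatorname{span}(W) = C(I)$ and $W = N(I)$. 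Then any nonzero $f \in I$ has $\mathrm{LM}(f) \notin N(I) = W$, i.e.\ $\mathrm{LM}(f)$ has a factor $\mathrm{LM}(g)$ for some $g \in G$ --- exactly the Gr\"obner basis condition.

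To prove the claim --- this is where the compositions are used --- suppose $f \in I$ is nonzero and in normal form modulo $G$, and write $f = \sum_k c_k\, p_k\, g_{i_k}\, q_k$ with $c_k \ne 0$, $p_k, q_k \in X^*$, $g_{i_k}\in G$; among all such representations pick one minimizing the word $m := \max_k p_k\mathrm{LM}(g_{i_k})q_k$, and among those one with the fewest indices attaining $m$. If $m = \mathrm{LM}(f)$ then $\mathrm{LM}(f)$ has a factor $\mathrm{LM}(g_{i_k})$, contradicting that $f$ is in normal form; so $m > \mathrm{LM}(f)$, forcing the terms of height $m$ to cancel, hence there are at least two, say with indices $a,b$. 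Inside the word $m$ the two relevant occurrences of $\mathrm{LM}(g_{i_a})$ and $\mathrm{LM}(g_{i_b})$ are (self-reducedness of $G$ rules out the nested case unless $g_{i_a} = g_{i_b}$ with coinciding occurrences): either equal, in which case the two terms may be merged; or disjoint, in which case independently replacing each leading monomial by its full generator shows $p_a g_{i_a} q_a - p_b g_{i_b} q_b$ is a combination of terms $c\,w\,g\,w'$ with $w\mathrm{LM}(g)w' < m$; or properly overlapping, in which case the overlap word is $\mathrm{LM}(g_{i_a})\,u = v\,\mathrm{LM}(g_{i_b})$ and $\varphi := g_{i_a}u - v g_{i_b}$ is a composition, so by hypothesis $\varphi = \sum_j d_j\, r_j\, g_{i_j}'\, s_j$ with every $r_j\mathrm{LM}(g_{i_j}')s_j \le \mathrm{LM}(\varphi)$, strictly below the overlap word; writing $m = p\,(\text{overlap word})\,s$ one gets $c_a p_a g_{i_a} q_a + c_b p_b g_{i_b} q_b = c_a\, p\,\varphi\, s + (c_a + c_b)\, p_b g_{i_b} q_b$, where $p\varphi s$ is a combination of terms of height $< m$. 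In each case substituting back produces a representation of $f$ with smaller $m$ or fewer terms of height $m$, contradicting minimality. Hence no such $f$ exists.

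The main obstacle is the leading-monomial bookkeeping in the overlapping case: one must check that ``$\varphi$ reduces to zero modulo $G$'' really yields a representation $\varphi = \sum_j d_j r_j g_{i_j}' s_j$ with all $r_j\mathrm{LM}(g_{i_j}')s_j \le \mathrm{LM}(\varphi)$ --- which holds because each reduction step subtracts $c\,w\,g\,w'$ with $w\mathrm{LM}(g)w'$ a monomial of the current remainder, and an induction shows no monomial exceeding $\mathrm{LM}(\varphi)$ is ever created --- and that multiplying such an identity on both sides by the outer parts of $m$ preserves all the order inequalities. A minor caveat is that the definition of composition as stated formally excludes the overlap of a generator with itself, so the overlapping case above should be read with the understanding that these self-overlaps are also required to reduce to zero (or the definition widened accordingly). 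Everything else is routine once these order estimates are in place.
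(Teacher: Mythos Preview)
Your argument is a correct rendering of Bergman's diamond lemma and would prove the theorem (with the self-overlap caveat you flag, which is a genuine subtlety in the definition as stated). The paper, however, does not prove this result at all: its entire proof is the citation ``de Graaf \cite[Theorem 6.1.6, Corollary 6.1.8]{deGraaf}.'' So there is nothing to compare at the level of mathematical content --- you have supplied a full proof where the paper simply defers to the literature. Your approach is the standard one found in that reference and in Bergman's original paper.
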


\begin{proof}
de Graaf \cite[Theorem 6.1.6, Corollary 6.1.8]{deGraaf}.
\end{proof}

%%%%%%%%%%%%%%%%%%%%%%%%%%%%%%%%%%%%%%%%%%%%%%%%%%%%%%%%%%%%%%%%%%%%%%%%

\section{Universal associative envelopes of alternating $n$-ary algebras}
\label{alternatingsection}

Let $L$ be an $(n{+}1)$-dimensional $n$-ary algebra with an alternating product
which does not necessarily satisfy the generalized Jacobi identity. We are
primarily interested in $n$-Lie algebras but in this section we consider a more
general situation.

\begin{notation} \label{ydefinition}
Let $B = \{ e_1, e_2, \dots, e_{n+1} \}$ be an ordered basis of $L$. Consider
the bijection $\phi\colon B \to X = \{ x_1, x_2, \dots, x_{n+1} \}$ defined by
$\phi(e_i) = x_i$. We extend $\phi$ to a linear map $\phi\colon L\to F \langle
X \rangle$ and write $y_i = \phi( [ e_1, \dots, \widehat{e_i}, \dots, e_{n+1}
])$.
\end{notation}

\begin{definition}
Let $A$ be an associative algebra. On the underlying vector space of $A$ we
define a new operation, the $n$-ary \emph{alternating sum}:
  \[
  \mathrm{alt}( x_1, x_2, \dots, x_n )
  =
  \sum_{\sigma \in S_n}
  \epsilon(\sigma) \,
  x_{\sigma(1)} \, x_{\sigma(2)} \, \cdots \, x_{\sigma(n)}.
  \]
We write $A^-$ for the \emph{minus algebra}: the alternating $n$-ary algebra
obtained by replacing the associative product by the alternating sum.
\end{definition}

\begin{definition} \label{uaedefinition}
A \emph{universal associative envelope} of the alternating $n$-ary algebra $L$
consists of a unital associative algebra $U$ and a linear map $i\colon L \to U$
satisfying
  \[
  i([x_1, x_2,\dots, x_n])
  =
  \mathrm{alt}
  \big(
  i(x_1), i(x_2), \dots, i(x_n)
  \big)
  \quad
  (x_1, \dots, x_n \in L),
  \]
such that for any unital associative algebra $A$ and linear map $j\colon L \to
A$ satisfying the same equation with $j$ in place of $i$, there is a unique
homomorphism of unital associative algebras $\psi\colon U \to A$ such that
$\psi \circ i = j$.
\end{definition}

\begin{definition} \label{idealgenerators}
Consider the following elements of $F\langle X \rangle$ for $1 \le i \le n+1$:
  \[
  G_i =
  (-1)^{\lfloor n/2 \rfloor}
  \big(
  \mathrm{alt}
  (
  x_1, \dots, \widehat{x}_i, \dots, x_{n+1}
  )
  -
  y_i
  \big).
  \]
The factor $(-1)^{\lfloor n/2 \rfloor}$ ensures that $\mathrm{LM}(G_i) =
x_{n+1} \cdots \widehat{x}_i \cdots x_1$ has coefficient 1.
\end{definition}

\begin{notation}
Let $I \subseteq F\langle X \rangle$ be the ideal generated by $G_1, \dots,
G_{n+1}$. We write $U = F\langle X \rangle / I$ with surjection $\pi\colon
F\langle X \rangle \to U$ sending $f$ to $f + I$, and $i = \pi \circ \phi$ for
the natural map $i \colon L \to U$.
\end{notation}

\begin{lemma}
The unital associative algebra $U$ and the linear map $i$ form the universal
associative envelope of the alternating $n$-ary algebra $L$.
\end{lemma}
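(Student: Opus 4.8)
The plan is to verify directly that the pair $(U,i)$ satisfies the envelope relation of Definition~\ref{uaedefinition}, and then to deduce the universal mapping property from the freeness of $F\langle X\rangle$ and the fact that $\pi$ is a surjective unital algebra homomorphism with kernel $I$.

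First I would check that $i([x_1,\dots,x_n]) = \mathrm{alt}(i(x_1),\dots,i(x_n))$ for all $x_1,\dots,x_n\in L$. Since both sides are multilinear in their arguments, it suffices to check this when each $x_j$ is a basis vector. If two of these basis vectors coincide, both sides vanish: the left side because the $n$-ary product on $L$ is alternating, the right side because $\mathrm{alt}$ is an alternating function of its arguments. Otherwise the arguments are, in some order, the elements of $B\setminus\{e_i\}$ for a unique $i$, and permuting them by $\sigma\in S_n$ multiplies both sides by $\epsilon(\sigma)$ (again by the alternating identity on $L$ and the alternating property of $\mathrm{alt}$), so we reduce to the single identity $i([e_1,\dots,\widehat{e}_i,\dots,e_{n+1}]) = \mathrm{alt}(i(e_1),\dots,\widehat{i(e_i)},\dots,i(e_{n+1}))$. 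Here $i(e_k)=\pi(x_k)$ and $\pi$ is an algebra homomorphism, so it commutes with $\mathrm{alt}$; also $i([e_1,\dots,\widehat{e}_i,\dots,e_{n+1}]) = \pi(\phi([e_1,\dots,\widehat{e}_i,\dots,e_{n+1}])) = \pi(y_i)$. Thus the identity to be proved is exactly $\pi\big(\mathrm{alt}(x_1,\dots,\widehat{x}_i,\dots,x_{n+1}) - y_i\big) = 0$, which holds because that element is a nonzero scalar multiple of the generator $G_i\in I$.

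Next I would establish the universal property. Given a unital associative algebra $A$ and a linear map $j\colon L\to A$ satisfying the same relation with $j$ in place of $i$, the freeness of $F\langle X\rangle$ yields a unique unital algebra homomorphism $\widehat\psi\colon F\langle X\rangle\to A$ with $\widehat\psi(x_k)=j(e_k)$. The key point is that $\widehat\psi$ annihilates every $G_i$: since $\widehat\psi$ is an algebra homomorphism it commutes with $\mathrm{alt}$, so $\widehat\psi(\mathrm{alt}(x_1,\dots,\widehat{x}_i,\dots,x_{n+1})) = \mathrm{alt}(j(e_1),\dots,\widehat{j(e_i)},\dots,j(e_{n+1}))$, whereas $\widehat\psi(y_i) = \widehat\psi(\phi([e_1,\dots,\widehat{e}_i,\dots,e_{n+1}])) = j([e_1,\dots,\widehat{e}_i,\dots,e_{n+1}])$ because $\phi$ is linear and $\widehat\psi\circ\phi$ and $j$ agree on the basis $B$. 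The relation for $j$ then gives $\widehat\psi(G_i)=0$, hence $\widehat\psi(I)=0$, so $\widehat\psi$ factors as $\widehat\psi=\psi\circ\pi$ for a unique unital algebra homomorphism $\psi\colon U\to A$. This $\psi$ satisfies $\psi\circ i = \psi\circ\pi\circ\phi = \widehat\psi\circ\phi = j$ on $B$, hence on all of $L$ by linearity. Uniqueness of $\psi$ is immediate: $U$ is generated as a unital algebra by the elements $i(e_k)=\pi(x_k)$, so the condition $\psi\circ i = j$ determines $\psi$ on a generating set.

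I do not expect a serious obstacle: this is the standard fact that an algebra presented by generators and relations has the expected universal property among algebras in which those relations are satisfied. The only step requiring any care is the multilinearity-plus-alternating reduction in the first part, which is what lets us pass from the envelope relation on all tuples in $L$ down to the finitely many generators $G_1,\dots,G_{n+1}$; once that bookkeeping is carried out, the rest is a formal consequence of the freeness of $F\langle X\rangle$ and the definition of $I$.
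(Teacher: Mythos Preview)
Your argument is correct and follows exactly the standard construction the paper invokes: the paper's own proof is just the one line ``Similar to the case $n=2$; see Humphreys \cite[\S 17.2]{Humphreys}'', and what you have written is precisely that standard verification spelled out for the $n$-ary setting. There is nothing to add.
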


\begin{proof}
Similar to the case $n = 2$; see Humphreys \cite[\S 17.2]{Humphreys}.
\end{proof}

\begin{lemma}\label{com}
There is only one overlap among $\mathrm{LM}(G_1), \dots,
\mathrm{LM}(G_{n+1})$, namely $\mathrm{LM}(G_1) = x_{n+1} \cdots x_2$ and
$\mathrm{LM}(G_{n+1}) = x_n \cdots x_1$ have the common factor $x_n \cdots
x_2$. Hence there is only one composition among the generators: $G_1 x_1 -
x_{n+1} G_{n+1}$.
\end{lemma}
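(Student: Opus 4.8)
The plan is to convert the combinatorics of word overlaps into elementary statements about the index sets underlying the monomials $\mathrm{LM}(G_i)$. Write $S_i = \{1,\dots,n+1\}\setminus\{i\}$, so that $\mathrm{LM}(G_i) = x_{n+1}\cdots\widehat{x}_i\cdots x_1$ is the word listing $S_i$ in strictly decreasing order; in particular all $n+1$ of these leading monomials have degree $n$. Since two distinct words of the same length cannot be factors of each other, no $\mathrm{LM}(G_i)$ is a factor of any $\mathrm{LM}(G_j)$; hence every composition comes from a genuine overlap, and in the notation of the composition definition the non-factor hypotheses force $u$ and $v$ to be nonempty. So it suffices to determine all ordered pairs $(i,j)$ (allowing $i=j$) for which some nonempty proper suffix of $\mathrm{LM}(G_i)$ equals some nonempty proper prefix of $\mathrm{LM}(G_j)$.

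First I would observe that a suffix of a strictly decreasing word is made up of its smallest letters, and a prefix of its largest letters. Thus an overlap of length $k$ with $1\le k\le n-1$ between $\mathrm{LM}(G_i)$ and $\mathrm{LM}(G_j)$ exists if and only if the set $T$ of the $k$ smallest elements of $S_i$ coincides with the set of the $k$ largest elements of $S_j$: once the two $k$-element sets agree, the associated decreasing words agree automatically.

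Next I would bound $T$. Because $S_i$ omits only $i$, its $k$ smallest elements lie in $\{1,\dots,k+1\}$, so $\max T\le k+1$; because $S_j$ omits only $j$, its $k$ largest elements lie in $\{n+1-k,\dots,n+1\}$, so $\min T\ge n+1-k$. Since $T$ is a $k$-element set of integers, $\max T-\min T\ge k-1$, and combining the three inequalities yields $(k+1)-(n+1-k)\ge k-1$, that is, $k\ge n-1$; with $k\le n-1$ this forces $k=n-1$. Then $T\subseteq\{2,\dots,n\}$ and $|T|=n-1$ force $T=\{2,\dots,n\}$. Now $1$ is the smallest element of $S_i$ whenever $i\ne1$, so $1\in T$ unless $i=1$; hence $i=1$. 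Symmetrically, $n+1$ is the largest element of $S_j$ whenever $j\ne n+1$, so $n+1\in T$ unless $j=n+1$; hence $j=n+1$. A direct check confirms that this pair really does overlap: the suffix of length $n-1$ of $\mathrm{LM}(G_1)=x_{n+1}x_n\cdots x_2$ and the prefix of length $n-1$ of $\mathrm{LM}(G_{n+1})=x_n\cdots x_2x_1$ are both equal to $x_n\cdots x_2$.

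Finally I would exhibit the resulting composition: taking $v=x_{n+1}$, a proper left factor of $\mathrm{LM}(G_1)$, and $u=x_1$, a proper right factor of $\mathrm{LM}(G_{n+1})$, one has $\mathrm{LM}(G_1)\,u = x_{n+1}x_n\cdots x_1 = v\,\mathrm{LM}(G_{n+1})$, so the unique composition is $G_1x_1 - x_{n+1}G_{n+1}$. I expect the only delicate point to be the bookkeeping in the bounding step: the one-sided estimates $\max T\le k+1$ and $\min T\ge n+1-k$ must be pinned down exactly (an off-by-one would spuriously admit a length $n-2$ overlap), and one must be sure the set-theoretic reformulation really covers every ordered pair, including $i=j$ and $(i,j)=(n+1,1)$.
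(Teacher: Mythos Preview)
Your argument is correct and rests on the same observation the paper uses---that each $\mathrm{LM}(G_i)$ is the strictly decreasing word on $\{1,\dots,n{+}1\}\setminus\{i\}$---only you carry out the overlap analysis in full combinatorial detail whereas the paper's proof is the single sentence ``The subscripts in $\mathrm{LM}(G_i)$ are the sequence $n{+}1 > \cdots > \widehat{i} > \cdots > 1$.'' Your bounding step is clean and the off-by-one concerns you flag are handled correctly.
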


\begin{proof}
The subscripts in $\mathrm{LM}(G_i)$ are the sequence $n{+}1 > \cdots >
\widehat{i} > \cdots > 1$.
\end{proof}

\begin{theorem} \label{Nff}
The normal form of the composition $G_1 x_1 - x_{n+1} G_{n+1}$ is
  \[
  N
  =
  (-1)^n
  \sum_{i=1}^{n+1}
  (-1)^i
  \big(
  x_i G_i - (-1)^n G_i x_i
  \big)
  =
  (-1)^{n+1}
  \sum_{i=1}^{n+1} (-1)^i \big( x_i y_i - (-1)^n y_i x_i \big).
  \]
\end{theorem}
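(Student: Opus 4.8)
The plan is to carry the whole computation inside the degree filtration of $F\langle X\rangle$ and to let one combinatorial identity do the work. Write $A_i := \mathrm{alt}(x_1,\dots,\widehat{x_i},\dots,x_{n+1})$, so that $G_i = (-1)^{\lfloor n/2\rfloor}(A_i-y_i)$ and $\mathrm{LM}(G_i) = x_{n+1}x_n\cdots\widehat{x_i}\cdots x_1$. The identity I would record first comes from expanding the $(n{+}1)$-fold alternating sum $\Omega := \mathrm{alt}(x_1,x_2,\dots,x_{n+1})$ in two ways: collecting its monomials by their first letter gives $\Omega = \sum_{i=1}^{n+1}(-1)^{i-1}x_iA_i$, and collecting them by their last letter gives $\Omega = \sum_{i=1}^{n+1}(-1)^{n+1-i}A_ix_i$. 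Equating the two expressions and rearranging yields
\[
\sum_{i=1}^{n+1}(-1)^i\big(x_iA_i-(-1)^nA_ix_i\big)=0,
\]
which is exactly the top-degree cancellation that will surface in the reduction.

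Next I would run the reduction of $f := G_1x_1 - x_{n+1}G_{n+1}$ modulo $G$. Using $G_i=(-1)^{\lfloor n/2\rfloor}(A_i-y_i)$, split $f = (-1)^{\lfloor n/2\rfloor}(A_1x_1 - x_{n+1}A_{n+1}) - (-1)^{\lfloor n/2\rfloor}(y_1x_1 - x_{n+1}y_{n+1})$. The second summand is already normal: its monomials have degree $2$, while every $\mathrm{LM}(G_i)$ has degree $n\ge 3$. So only the degree-$(n{+}1)$ part $P := (-1)^{\lfloor n/2\rfloor}(A_1x_1 - x_{n+1}A_{n+1})$ has to be reduced, and note that every monomial of $P$ is multilinear in $x_1,\dots,x_{n+1}$. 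By the construction of the composition (Lemma \ref{com}) the common leading word $x_{n+1}x_n\cdots x_1$ cancels in $P$; thereafter each step uses some $G_i$ to rewrite an occurrence of $\mathrm{LM}(G_i)$ inside a multilinear word $u\,\mathrm{LM}(G_i)\,v$ in terms of strictly smaller alternating words — and since $\mathrm{LM}(G_i)$ omits precisely $x_i$, the single leftover letter $uv$ must be $x_i$, so the degree-$1$ tail of $G_i$ contributes exactly a term $\pm x_iy_i$ or $\pm y_ix_i$ to the degree-$2$ part. I would organise these steps so that they realise the two-way expansion of $\Omega$: the degree-$(n{+}1)$ content is steered through the partial alternating sums until $A_1x_1 - x_{n+1}A_{n+1}$ has been entirely rewritten as $0$ by the displayed identity, while the degree-$2$ terms deposited along the way are precisely those occurring in $\sum_i(-1)^i\big(x_iG_i-(-1)^nG_ix_i\big)$.

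Collecting the deposited degree-$2$ terms together with the original degree-$2$ part of $f$, and keeping track of the $(-1)^i$ and $(-1)^{\lfloor n/2\rfloor}$ signs, presents the normal form as $N = (-1)^n\sum_{i}(-1)^i\big(x_iG_i-(-1)^nG_ix_i\big)$; substituting $A_i = (-1)^{\lfloor n/2\rfloor}G_i + y_i$ back into this expression and invoking the displayed identity shows that the $A_i$-contributions cancel, leaving the closed form $(-1)^{n+1}\sum_i(-1)^i\big(x_iy_i-(-1)^ny_ix_i\big)$. This last expression involves only monomials of degree $\le 2$, hence no monomial of it has a factor $\mathrm{LM}(G_i)$, so it is in normal form modulo $G$, as required. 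I expect the middle paragraph to be the main obstacle: one has to be certain that the degree-$(n{+}1)$ part of $f$ reduces all the way down to $0$ and that the degree-$2$ terms produced en route accumulate with exactly the stated coefficients. The two-way expansion of $\Omega$ is the structural reason this works, but turning it into a legitimate sequence of reductions — each one rewriting the current leading word, with all the alternating signs tracked — is where the care is needed.
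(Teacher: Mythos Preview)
Your proposal is correct and takes essentially the same approach as the paper: the identity $\sum_i(-1)^i\big(x_iA_i-(-1)^nA_ix_i\big)=0$ that you extract from the two-way expansion of $\Omega$ is exactly the paper's computation that $R=0$, and the reductions you describe (each necessarily of the form $\pm x_iG_i$ or $\pm G_ix_i$, since every reducible multilinear word of degree $n{+}1$ has $x_i$ as its sole letter outside $\mathrm{LM}(G_i)$) are exactly the paper's sums $S$ and $T$. The paper sidesteps your acknowledged middle-paragraph obstacle by reversing the order of the argument: it first writes down the algebraic identity $N=(G_1x_1-x_{n+1}G_{n+1})+S+T$ and checks directly that $S$ and $T$ constitute a two-stage reduction sequence, and only afterwards invokes the $R=0$ cancellation to show that $N$ collapses to the degree-$2$ expression and is therefore already in normal form.
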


\begin{proof}
We first observe that $N$ can be rewritten as follows:
  \[
  (-1)^n
  \sum_{i=1}^{n+1}
  (-1)^i
  \big(
  x_i G_i - (-1)^n G_i x_i
  \big)
  =
  G_1 x_1 - x_{n+1} G_{n+1} + S + T,
  \]
where
  \[
  S =
  (-1)^n
  \sum_{i=2}^n
  (-1)^i
  \big(
  x_i G_i - (-1)^n G_i x_i
  \big),
  \quad
  T =
  -
  (-1)^n x_1 G_1 + (-1)^n G_{n+1} x_{n+1}.
  \]
To compute the normal form of $G_1 x_1 - x_{n+1} G_{n+1}$ using noncommutative
division with remainder, we perform two steps. First, we eliminate occurrences
of $\mathrm{LM}(G_1), \dots, \mathrm{LM}(G_{n+1})$ as factors in the monomials;
this corresponds to the sum $S$, and introduces new occurrences of
$\mathrm{LM}(G_1)$ and $\mathrm{LM}(G_{n+1})$. Second, we eliminate these last
two occurrences; this corresponds to the sum $T$. This shows that $N$ can be
obtained from $G_1 x_1 - x_{n+1} G_{n+1}$ by a sequence  of reductions modulo
the generators $G_1, \dots, G_{n+1}$ of the ideal $I$. Thus, in order to prove
that $N$ is the normal form of the composition, it remains to show that no
monomial occurring in $N$ has a factor equal to $\mathrm{LM}(G_i)$ for any $i =
1, \dots, n{+}1$.

For the following calculations, it is convenient to write
  \[
  G_i = (-1)^{\lfloor n/2 \rfloor}
  \Big(
  \sum_{\sigma \in S_n^{(i)}}
  \epsilon(\sigma)
  x_{\sigma(1)} \cdots \widehat{x}_{\sigma(i)} \cdots x_{\sigma(n+1)}
  -
  y_i
  \Big),
  \]
where $S_n^{(i)} \cong S_n$ is the symmetric group on $\{ 1, \dots,
\widehat{i}, \dots, n{+}1 \}$. To simplify the signs, we factor out $(-1)^n
(-1)^{\lfloor n/2 \rfloor}$ from the entire calculation. Thus we consider the
following simplified versions of $N$ and the ideal generators $G_i$:
  \[
  N = \sum_{i=1}^{n+1}
  (-1)^i
  \big(
  x_i G_i - (-1)^n G_i x_i
  \big),
  \quad
  G_i =
  \sum_{\sigma \in S_n^{(i)}}
  \epsilon(\sigma)
  x_{\sigma(1)} \cdots \widehat{x}_{\sigma(i)} \cdots x_{\sigma(n+1)}
  -
  y_i.
  \]
We rewrite $x_i G_i$ and $G_i x_i$ as follows:
  \begin{align}
  \label{xiGi}
  x_i G_i
  &=
  \sum_{\tiny \begin{array}{c} \tau \in S_{n+1} \\ \tau(1) = i \end{array}}
  (-1)^{i-1}
  \epsilon(\tau)
  x_i x_{\tau(2)} \cdots x_{\tau(n+1)}
  -
  x_i y_i,
  \\
  \label{Gixi}
  G_i x_i
  &=
  \sum_{\tiny \begin{array}{c} \tau \in S_{n+1} \\ \tau(n{+}1) = i \end{array}}
  (-1)^{n+1-i}
  \epsilon(\tau)
  x_{\tau(1)} \cdots x_{\tau(n)} x_i
  -
  y_i x_i.
  \end{align}
In $x_i G_i$, the symbol $x_i$ has moved left past $i{-}1$ symbols, so
$\epsilon(\sigma) = (-1)^{i-1} \epsilon(\tau)$. In $G_i x_i$, the symbol $x_i$
has moved right past $n{+}1{-}i$ symbols, so $\epsilon(\sigma)= (-1)^{n+1-i}
\epsilon(\tau)$. Therefore
  \begin{align*}
  &
  x_i G_i - (-1)^n G_i x_i
  =
  -
  \big( x_i y_i - (-1)^n y_i x_i \big)
  \\
  &
  +
  \!\!\!\!
  \sum_{\tiny \begin{array}{c} \tau \in S_{n+1} \\ \tau(1) = i \end{array}}
  \!\!\!\!
  (-1)^{i-1}
  \epsilon(\tau)
  x_i x_{\tau(2)} \cdots x_{\tau(n+1)}
  -
  \!\!\!\!
  \sum_{\tiny \begin{array}{c} \tau \in S_{n+1} \\ \tau(n{+}1) = i \end{array}}
  \!\!\!\!
  (-1)^{i-1}
  \epsilon(\tau)
  x_{\tau(1)} \cdots x_{\tau(n)} x_i.
  \end{align*}
From this we obtain
  \begin{align*}
  &
  (-1)^i \big( x_i G_i - (-1)^n G_i x_i \big)
  =
  - (-1)^i
  \big(
  x_i y_i - (-1)^n y_i x_i
  \big)
  \\
  &
  -
  \sum_{\tiny \begin{array}{c} \tau \in S_{n+1} \\ \tau(1) = i \end{array}}
  \epsilon(\tau)
  x_i x_{\tau(2)} \cdots x_{\tau(n+1)}
  +
  \sum_{\tiny \begin{array}{c} \tau \in S_{n+1} \\ \tau(n{+}1) = i \end{array}}
  \epsilon(\tau)
  x_{\tau(1)} \cdots x_{\tau(n)} x_i.
  \end{align*}
Summing over $i = 1, \dots, n{+}1$ gives
  \begin{align*}
  &
  \sum_{i=1}^{n+1}
  (-1)^i \big( x_i G_i - (-1)^n G_i x_i \big)
  =
  Q + R,
  \quad
  Q
  =
  - \sum_{i=1}^{n+1} (-1)^i \big( x_i y_i - (-1)^n y_i x_i \big),
  \\
  &
  R
  =
  {}
  -
  \sum_{i=1}^{n+1}
  \!\!\!\!
  \sum_{\tiny \begin{array}{c} \tau \in S_{n+1} \\ \tau(1) = i \end{array}}
  \!\!\!\!
  \epsilon(\tau)
  x_i x_{\tau(2)} \cdots x_{\tau(n+1)}
  +
  \sum_{i=1}^{n+1}
  \!\!\!\!
  \sum_{\tiny \begin{array}{c} \tau \in S_{n+1} \\ \tau(n{+}1) = i \end{array}}
  \!\!\!\!
  \epsilon(\tau)
  x_{\tau(1)} \cdots x_{\tau(n)} x_i.
  \end{align*}
It remains to show that $R = 0$. In the first (respectively second) double sum,
we separate terms according to the last (respectively first) symbol in each
monomial:
  \begin{align*}
  R
  &=
  -
  \sum_{i=1}^{n+1}
  \sum_{\tiny \begin{array}{c} j=1 \\ j \ne i \end{array}}^{n+1}
  \sum_{\tiny \begin{array}{c} \tau \in S_{n+1} \\ \tau(1) = i \\ \tau(n{+}1) = j \end{array}}
  \epsilon(\tau)
  x_i x_{\tau(2)} \cdots x_{\tau(n)} x_j
  \\
  &\qquad
  +
  \sum_{i=1}^{n+1}
  \sum_{\tiny \begin{array}{c} j=1 \\ j \ne i \end{array}}^{n+1}
  \sum_{\tiny \begin{array}{c} \tau \in S_{n+1} \\ \tau(1) = j \\ \tau(n{+}1) = i \end{array}}
  \epsilon(\tau)
  x_j x_{\tau(2)} \cdots x_{\tau(n)} x_i
  =
  0,
  \end{align*}
since both sums are over all pairs $(i,j)$ with $1 \le i \ne j \le n{+}1$.
\end{proof}

\begin{remark}
For $n$ even (respectively odd) the terms of $N$ can be written as Lie brackets
(respectively Jordan products):
  \[
  x_i G_i - (-1)^n G_i x_i = [ x_i, G_i ] \;\; \text{($n$ even)},
  \quad
  x_i G_i - (-1)^n G_i x_i = x_i \circ G_i \;\; \text{($n$ odd)}.
  \]
\end{remark}

%%%%%%%%%%%%%%%%%%%%%%%%%%%%%%%%%%%%%%%%%%%%%%%%%%%%%%%%%%%%%%%%%%%%%%%%

\section{Pozhidaev's conjecture for simple $n$-Lie algebras ($n$ even)}
\label{simplesection}

In the rest of this paper, we assume that $L$ is an $n$-Lie algebra. Pozhidaev
\cite{Pozhidaev} considered the problem whether there exists an embedding of an
arbitrary $n$-Lie algebra into an associative algebra, and made the following
conjecture:

\begin{conjecture}
For any reductive finite-dimensional $n$-Lie algebra $L$ over an algebraically
closed field of characteristic 0 there exists an associative algebra $A$ such
that $L$ is isomorphic to a subalgebra of $A^-$.
\end{conjecture}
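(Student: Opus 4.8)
The plan is to reduce the general reductive case to two building blocks—the simple algebra $L_{n+1}$ and an abelian algebra—solve each, and show that solutions combine under direct sums. First I would invoke the structure theory of reductive $n$-Lie algebras: such an $L$ decomposes as $L = S \oplus Z(L)$, where $Z(L)$ is the abelian center and $S$ is semisimple, and the semisimple part is a direct sum of simple ideals. By the uniqueness of the simple finite-dimensional $n$-Lie algebra (Ling \cite{Ling}, recalled in the introduction), each simple ideal is isomorphic to the algebra $L_{n+1}$ of Definition \ref{simpledefinition}. Hence it suffices to embed $L_{n+1}$ and $Z(L)$ separately into minus algebras, and to prove that embeddability is closed under direct sums.

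For the direct-sum closure, suppose $i_1 \colon L_1 \to A_1^-$ and $i_2 \colon L_2 \to A_2^-$ are embeddings. Form the product associative algebra $A = A_1 \times A_2$ and set $i(\ell_1, \ell_2) = (i_1(\ell_1), i_2(\ell_2))$. Since the product in $A$ is componentwise, so is $\mathrm{alt}$, and each component of $i(u_k)$ depends only on the corresponding summand of $u_k$; a short multilinearity check then gives $\mathrm{alt}(i(u_1), \dots, i(u_n)) = i([u_1, \dots, u_n])$, the mixed cross terms vanishing because the bracket on $L_1 \oplus L_2$ has no mixed components. Injectivity of $i$ is immediate. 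For the abelian factor $Z(L)$ every bracket is zero, so I would embed $Z(L)$ linearly into any commutative associative algebra (say a polynomial ring on $\dim Z(L)$ independent generators); there $\mathrm{alt}$ vanishes identically because $\sum_{\sigma \in S_n} \epsilon(\sigma) = 0$ for $n \ge 2$, matching the zero bracket.

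The heart of the argument is the simple factor $L_{n+1}$, where I would take the universal envelope $U = F\langle X \rangle / I$ of Section \ref{alternatingsection} and prove that $i \colon L_{n+1} \to U$ is injective. For $L_{n+1}$ the defining data are $y_i = (-1)^{n+i+1} x_i$, and by Lemma \ref{com} the only composition among the generators is $G_1 x_1 - x_{n+1} G_{n+1}$, whose normal form is given by Theorem \ref{Nff}. Substituting $y_i = (-1)^{n+i+1} x_i$, the $i$-th term of that normal form becomes proportional to $x_i x_i - (-1)^n x_i x_i$. For $n$ even this is $x_i^2 - x_i^2 = 0$, so the single composition reduces to zero; the generators form a self-reduced set (their leading monomials, being words missing distinct letters, do not divide one another except for the one overlap of Lemma \ref{com}), so by Theorem \ref{di} they already constitute a Gr\"obner basis. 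The normal words then furnish an explicit monomial basis of $U$ from which one reads off that $x_1, \dots, x_{n+1}$ remain linearly independent modulo $I$, giving the embedding $L_{n+1} \hookrightarrow U^-$.

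The main obstacle is the simple factor for $n$ odd, which is exactly where the conjecture remains genuinely hard. For odd $n$ the same substitution yields the $i$-th term proportional to $x_i^2 + x_i^2 = 2 x_i^2$, so the composition reduces not to zero but to the nonzero relation $\sum_{i=1}^{n+1} x_i^2 = 0$ in $U$. This relation must be adjoined to the generating set, and Theorem \ref{di} then forces one to examine the new compositions it creates with the $G_i$; these need not close up after finitely many steps, so the Gr\"obner basis may grow without an evident bound. Controlling this—either by proving the enlarged basis is finite and still leaves $x_1, \dots, x_{n+1}$ independent, or by exhibiting a concrete associative model (a matrix or differential-operator realization) into which $L_{n+1}$ embeds uniformly for all odd $n$—is the crux of the full conjecture, and is consistent with the computational difficulties recorded in Section \ref{oddsection}. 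Pozhidaev's verification for $n \le 5$ \cite{Pozhidaev} indicates the statement should be true, but a uniform treatment of odd $n$ is precisely what remains to be supplied.
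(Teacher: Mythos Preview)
The statement you are attempting is labelled a \emph{Conjecture} in the paper, and the paper does not prove it in full: only the case of even $n$ is settled (Theorem~\ref{comproof} and its corollaries), while Section~\ref{oddsection} records computational evidence that odd $n$ is genuinely harder.  Your proposal has exactly the same status: it is a complete argument for $n$ even and an honest acknowledgment that $n$ odd remains open.  So there is no gap relative to the paper---you have reproduced what the paper actually establishes, not the full conjecture.

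For $n$ even your treatment of the simple factor $L_{n+1}$ follows the paper's route precisely: substitute $y_i = (-1)^{n+i+1} x_i$ into the normal form of Theorem~\ref{Nff}, observe that each term becomes $x_i^2 - (-1)^n x_i^2 = 0$, and conclude via Theorem~\ref{di} that $\{G_1,\dots,G_{n+1}\}$ is already a Gr\"obner basis, whence $x_1,\dots,x_{n+1}$ are independent modulo $I$.  Where you differ from the paper is only in the reduction step: the paper simply invokes Ling's decomposition and declares that ``the main problem is to prove that $L_{n+1}$ can be embedded into an associative algebra,'' leaving the assembly of the pieces implicit, whereas you spell out the direct-sum closure via the product algebra $A_1 \times A_2$ and the embedding of the abelian center into a commutative polynomial ring.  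These auxiliary arguments are elementary and correct, and they make the passage from the simple case to the general reductive case explicit in a way the paper does not bother to.
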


By the work of Ling \cite{Ling} it is known that any reductive
finite-dimensional $n$-Lie algebra over an algebraically closed field of
characteristic 0 decomposes into the direct sum of an Abelian ideal and several
copies of a simple ideal isomorphic to the simple $(n{+}1)$-dimensional $n$-Lie
algebra $L_{n+1}$. Hence the main problem is to prove that $L_{n+1}$ can be
embedded into an associative algebra.

\begin{theorem}\label{comproof}
Let $n \ge 3$ and let $F$ be a field of characteristic $\ne 2$. Let $L$ be the
simple $n$-Lie algebra $L_{n+1}$ over $F$ from Definition
\ref{simpledefinition}. The generators $\{ G_1, \dots, G_{n+1} \}$ of
Definition \ref{idealgenerators} form a Gr\"obner basis for the ideal $I =
\langle G_1, \dots, G_{n+1} \rangle$ in the free associative algebra $F\langle
x_1, \dots, x_{n+1} \rangle$ if and only if $n$ is even.
\end{theorem}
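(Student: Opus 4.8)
The plan is to apply the composition criterion of Theorem \ref{di}: since $\{G_1,\dots,G_{n+1}\}$ is self-reduced (the leading monomials $x_{n+1}\cdots\widehat{x}_i\cdots x_1$ are pairwise incomparable under divisibility, by Lemma \ref{com}, and each $G_i$ is monic by the choice of sign in Definition \ref{idealgenerators}), the set is a Gr\"obner basis if and only if the normal form modulo $\{G_1,\dots,G_{n+1}\}$ of every composition reduces to zero. By Lemma \ref{com} there is exactly one composition, namely $G_1x_1 - x_{n+1}G_{n+1}$, and by Theorem \ref{Nff} its normal form is
\[
N = (-1)^{n+1}\sum_{i=1}^{n+1}(-1)^i\big(x_iy_i - (-1)^n y_ix_i\big).
\]
So the whole problem reduces to deciding when $N$ reduces to zero modulo the generators. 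The key observation is that $N$ is already a normal-form polynomial: each $y_i = \phi([e_1,\dots,\widehat{e_i},\dots,e_{n+1}])$ is a linear combination of the symbols $x_1,\dots,x_{n+1}$ (not of the leading monomials of any $G_j$, which have degree $n\ge 3$), so every monomial occurring in $N$ has degree $\le 2$ and hence contains no factor equal to any $\mathrm{LM}(G_j)$. Therefore $N$ is its own normal form, and the Gr\"obner basis condition holds if and only if $N=0$ as an element of $F\langle X\rangle$.

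First I would substitute the explicit structure constants of $L_{n+1}$ from Definition \ref{simpledefinition}: $[e_1,\dots,\widehat{e}_i,\dots,e_{n+1}] = (-1)^{n+i+1}e_i$, so $y_i = (-1)^{n+i+1}x_i$. Plugging this in,
\[
N = (-1)^{n+1}\sum_{i=1}^{n+1}(-1)^i (-1)^{n+i+1}\big(x_ix_i - (-1)^n x_ix_i\big)
  = (-1)^{n+1}\sum_{i=1}^{n+1}(-1)^{n+1}\big(1 - (-1)^n\big)x_i^2,
\]
which simplifies to $N = \big(1-(-1)^n\big)\sum_{i=1}^{n+1}x_i^2$. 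When $n$ is even this is $0$, so the composition reduces to zero and $\{G_1,\dots,G_{n+1}\}$ is a Gr\"obner basis. When $n$ is odd, $1-(-1)^n = 2 \ne 0$ (here we use $\operatorname{char} F \ne 2$), so $N = 2\sum_{i=1}^{n+1}x_i^2 \ne 0$; since $N$ is in normal form, it is a nonzero element of $I$ all of whose monomials are normal words, so by definition $\{G_1,\dots,G_{n+1}\}$ is not a Gr\"obner basis (equivalently, $N$ is a nonzero obstruction and further generators, such as $\sum x_i^2 - c$ for a suitable scalar, would have to be adjoined).

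The only point requiring a little care — and the step I expect to be the main (if modest) obstacle — is justifying rigorously that $N$ is genuinely in normal form, i.e.\ that no further reductions are possible, so that $N\ne 0$ really does certify the failure of the Gr\"obner property rather than merely reflecting an incomplete reduction. This follows from the degree bound just noted together with the self-reducedness of $G$: a nonzero polynomial all of whose monomials are normal words cannot be reduced further, and by Proposition \ref{C(I)proposition} such a polynomial, if it lies in $I$, witnesses that $C(I)\cap I \ne 0$ would fail were $G$ a Gr\"obner basis — more directly, $\mathrm{LM}(N)$ is a normal word that equals $\mathrm{LM}(f)$ for $f=N\in I$, contradicting the definition of Gr\"obner basis. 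Everything else is the bookkeeping of signs in the substitution, which is routine.
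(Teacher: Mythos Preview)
Your proposal is correct and follows essentially the same approach as the paper: substitute the structure constants $y_i = (-1)^{n+i+1}x_i$ into the normal form $N$ from Theorem~\ref{Nff}, obtain $N = (1-(-1)^n)\sum_i x_i^2$, and invoke Theorem~\ref{di} together with $\operatorname{char} F \ne 2$. The only difference is that you spell out more explicitly why $N$ is already in normal form (via the degree bound) and why $N\ne 0$ genuinely witnesses failure of the Gr\"obner property, whereas the paper takes these points for granted; your parenthetical remark about adjoining ``$\sum x_i^2 - c$'' is unnecessary and slightly off (no scalar shift arises), but it does not affect the argument.
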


\begin{proof}
The structure constants for $L_{n+1}$ give $y_i = (-1)^{n+i+1} x_i$ and so
  \[
  G_i =
  (-1)^{\lfloor n/2 \rfloor}
  \big(
  \mathrm{alt}
  (
  x_1, \dots, \widehat{x}_i, \dots, x_{n+1}
  )
  +
  (-1)^{n+i} x_i
  \big).
  \]
By Theorem \ref{Nff} the normal form of the single composition of these
generators is
  \[
  N
  =
  \sum_{i=1}^{n+1}
  \big( 1 - (-1)^n \big)
  x_i^2
  =
  \begin{cases}
  0 &\text{if $n$ is even}, \\
  2 \sum_{i=1}^{n+1} x_i^2 &\text{if $n$ is odd}.
  \end{cases}
  \]
Since $\mathrm{char}\,F \ne 2$ we have $N = 0$ if and only if $n$ is even, and
by Theorem \ref{di} this is equivalent to $\{ G_1, \dots, G_{n+1} \}$ being a
Gr\"{o}bner basis.
\end{proof}

\begin{corollary} \label{u}
Let $n \ge 4$ be even and let $F$ be a field of characteristic $\ne 2$. Let $L$
be the simple $n$-Lie algebra $L_{n+1}$ over $F$. The universal associative
enveloping algebra $U(L)$ is infinite-dimensional, and a basis consists of the
monomials which do not contain any factor of the form $x_{i_1} \cdots x_{i_n}$
with $i_1 > \cdots > i_n$.
\end{corollary}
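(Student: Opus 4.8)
The plan is to derive Corollary~\ref{u} from Theorem~\ref{comproof} by applying Proposition~\ref{C(I)proposition} together with the characterization of normal words. By Theorem~\ref{comproof}, since $n$ is even, the set $\{G_1,\dots,G_{n+1}\}$ is a Gr\"obner basis of the ideal $I$. First I would recall that for a Gr\"obner basis the set $N(I)$ of normal words modulo $I$ coincides with the set of monomials that contain no factor equal to $\mathrm{LM}(G_i)$ for any $i$; this is the standard consequence of the definition of Gr\"obner basis (if $w$ had such a factor it would be $\mathrm{LM}(f)$ for a suitable $f\in I$, and conversely if $w=\mathrm{LM}(f)$ for some $f\in I$ then some $\mathrm{LM}(G_i)$ is a factor of $w$). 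By Proposition~\ref{C(I)proposition}, $F\langle X\rangle = C(I)\oplus I$, so the images of the normal words $N(I)$ form a basis of $U(L) = F\langle X\rangle/I$.

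Next I would identify the leading monomials explicitly. From Definition~\ref{idealgenerators} and the remark there, $\mathrm{LM}(G_i) = x_{n+1}\cdots\widehat{x}_i\cdots x_1$, i.e.\ the strictly decreasing word on the index set $\{1,\dots,n+1\}\setminus\{i\}$, which has length $n$. So a monomial $w$ lies in $N(I)$ if and only if it has no factor of the form $x_{j_1}x_{j_2}\cdots x_{j_n}$ with $j_1 > j_2 > \cdots > j_n$ and $\{j_1,\dots,j_n\}$ an $n$-element subset of $\{1,\dots,n+1\}$. I would then note that any strictly decreasing word of length $n$ in the alphabet $\{x_1,\dots,x_{n+1}\}$ automatically has its index set equal to such a subset, so the condition is exactly ``$w$ contains no strictly decreasing factor of length $n$,'' which matches the statement of the corollary.

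Finally, for the infinite-dimensionality claim, I would exhibit an explicit infinite family of normal words, for example the powers $x_1^k$ for $k\ge 0$ (or any $x_i^k$): a strictly decreasing factor of length $n\ge 2$ uses $n$ distinct symbols, so a word in a single letter can never contain one. Hence $\{x_1^k : k\ge 0\}\subseteq N(I)$, these are linearly independent in $U(L)$, and therefore $U(L)$ is infinite-dimensional. I do not expect any serious obstacle here; the only point requiring a little care is the bookkeeping that a decreasing word of length exactly $n$ in an $(n{+}1)$-letter alphabet is precisely a word of the form $x_{i_1}\cdots x_{i_n}$ with $i_1>\cdots>i_n$, and stating clearly why ``no such factor'' is equivalent to ``not divisible by any $\mathrm{LM}(G_i)$.''
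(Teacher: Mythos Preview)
Your proposal is correct and follows essentially the same approach as the paper: invoke Theorem~\ref{comproof} to get a Gr\"obner basis, then use Proposition~\ref{C(I)proposition} to identify the basis of $U(L)$ with the normal words, i.e.\ monomials avoiding each $\mathrm{LM}(G_i)$. You supply more detail than the paper does---in particular the explicit check that strictly decreasing length-$n$ words in an $(n{+}1)$-letter alphabet are exactly the $\mathrm{LM}(G_i)$, and the explicit infinite family $x_1^k$ for infinite-dimensionality---but the underlying argument is the same.
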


\begin{proof}
Since $\{ G_1, \dots, G_{n+1} \}$ is Gr\"obner basis, Proposition
\ref{C(I)proposition} shows that the normal words of $F\langle X\rangle$ modulo
$I$, or equivalently the coset representatives for $U(L) = F\langle X \rangle /
I$, are those that do not contain any $\mathrm{LM}(G_i)$ as a factor.
\end{proof}

\begin{corollary} \label{simpleinjective}
Let $n \ge 4$ be even and let $F$ be a field of characteristic $\ne 2$. For the
simple $n$-Lie algebra $L_{n+1}$ the natural map $i\colon L_{n+1} \to
U(L_{n+1})$ is injective.
\end{corollary}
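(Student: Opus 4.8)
The plan is to read off the injectivity of $i\colon L_{n+1}\to U(L_{n+1})$ directly from the monomial basis for $U(L)$ produced in Corollary \ref{u}. Since $\{G_1,\dots,G_{n+1}\}$ is a Gr\"obner basis of $I$ (Theorem \ref{comproof}, using that $n$ is even), Proposition \ref{C(I)proposition} gives the vector space decomposition $F\langle X\rangle = C(I)\oplus I$, and $U(L_{n+1}) = F\langle X\rangle/I \cong C(I)$ as vector spaces, with $C(I)$ spanned by the normal words $N(I)$. Concretely, $N(I)$ consists of exactly those words in $x_1,\dots,x_{n+1}$ that contain no factor of the form $\mathrm{LM}(G_i) = x_{n+1}\cdots\widehat{x}_i\cdots x_1$, i.e.\ no strictly decreasing factor of length $n$.

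The key observation is that each generator $G_i$ has degree $n\ge 4$, so every monomial of degree $1$ is automatically a normal word: the single letters $x_1,\dots,x_{n+1}$ all lie in $N(I)$. First I would note that $i = \pi\circ\phi$, where $\phi\colon L_{n+1}\to F\langle X\rangle$ is the linear isomorphism onto the span of $x_1,\dots,x_{n+1}$ and $\pi\colon F\langle X\rangle\to U(L_{n+1})$ is the quotient map. So it suffices to show that $\pi$ is injective on $\mathrm{span}(x_1,\dots,x_{n+1})$. Suppose $v = \sum_{i=1}^{n+1} c_i x_i$ lies in $\ker\pi = I$. Then, using $F\langle X\rangle = C(I)\oplus I$, the element $v$ is both in $I$ and (being a linear combination of normal words $x_1,\dots,x_{n+1}\in N(I)$) in $C(I)$; hence $v = 0$ and all $c_i = 0$. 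Therefore $i$ is injective.

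There is essentially no obstacle here: all the work has already been done in establishing that the $G_i$ form a Gr\"obner basis and in identifying the normal words. The only point requiring any care is the hypothesis $n\ge 4$ rather than $n\ge 3$ — this guarantees $\deg(G_i) = n\ge 4 > 1$, so that the degree-one monomials are certainly not leading monomials of ideal elements; in the (even) case $n\ge 4$ this is immediate. One could equally phrase the argument via the universal property: the adjoint-type representation of $L_{n+1}$, or any faithful representation whose associated linear map $L_{n+1}\to \mathrm{End}(V)$ satisfies the defining relation, factors through $U(L_{n+1})$, forcing $i$ to be injective; but the Gr\"obner basis argument is cleaner and self-contained, so I would present that.

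\begin{proof}
By Theorem \ref{comproof}, since $n$ is even, the set $\{G_1,\dots,G_{n+1}\}$
is a Gr\"obner basis of the ideal $I$. Hence, by Proposition
\ref{C(I)proposition}, $F\langle X\rangle = C(I)\oplus I$, where $C(I)$ is
spanned by the normal words $N(I)$. By Corollary \ref{u} (or directly, since
each $\mathrm{LM}(G_i)$ has degree $n\ge 4$), every monomial of degree $1$
is a normal word; in particular $x_1,\dots,x_{n+1}\in N(I)\subseteq C(I)$.

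Recall that $i = \pi\circ\phi$, where $\phi\colon L_{n+1}\to F\langle X\rangle$
is the linear map with $\phi(e_j)=x_j$ and $\pi\colon F\langle X\rangle\to
U(L_{n+1})=F\langle X\rangle/I$ is the canonical surjection. Suppose
$u\in L_{n+1}$ with $i(u)=0$, and write $u=\sum_{j=1}^{n+1}c_j e_j$ with
$c_j\in F$. Then $\phi(u)=\sum_{j=1}^{n+1}c_j x_j\in\ker\pi=I$. On the other
hand $\phi(u)\in C(I)$, being a linear combination of the normal words
$x_1,\dots,x_{n+1}$. Since $C(I)\cap I=\{0\}$, we get $\phi(u)=0$, so
$c_1=\dots=c_{n+1}=0$ and $u=0$. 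Therefore $i\colon L_{n+1}\to U(L_{n+1})$ is
injective.
\end{proof}
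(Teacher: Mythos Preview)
Your proof is correct and follows the same approach as the paper: you show that $\mathrm{span}(x_1,\dots,x_{n+1})\cap I=\{0\}$ using the Gr\"obner basis description of normal words, so that the cosets of the $x_j$ are linearly independent in $U(L_{n+1})$. The paper states this in one sentence, while you spell out the use of Proposition~\ref{C(I)proposition} and the decomposition $F\langle X\rangle=C(I)\oplus I$ explicitly, but the argument is the same.
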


\begin{proof}
The intersection of $I = \langle G_1, \dots, G_{n+1} \rangle$ with
$\mathrm{span}( x_1, \dots, x_{n+1} )$ is 0, and hence the cosets of the $x_i$
are linearly independent in $U(L_{n+1})$.
\end{proof}

We now obtain a proof of Pozhidaev's conjecture \cite{Pozhidaev} in the case of
$n$ even.

\begin{corollary}
Let $n \ge 4$ be even and let $F$ be a field of characteristic $\ne 2$. There
exists an associative algebra $A$ such that the simple $n$-Lie algebra
$L_{n+1}$ is isomorphic to a subalgebra of $A^-$.
\end{corollary}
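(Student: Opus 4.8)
The plan is to take $A = U(L_{n+1})$, the universal associative enveloping algebra constructed above, with its natural map $i\colon L_{n+1} \to U(L_{n+1})$. By Corollary \ref{simpleinjective}, this map is injective, so it remains only to check that $i$ is a homomorphism of $n$-ary algebras from $L_{n+1}$ into $A^-$; that is, that $i([x_1,\dots,x_n]) = \mathrm{alt}(i(x_1),\dots,i(x_n))$ for all $x_1,\dots,x_n \in L_{n+1}$. But this is precisely the defining relation of the universal associative envelope recorded in Definition \ref{uaedefinition}, which holds by construction of $U(L)$ as the quotient $F\langle X\rangle / I$ by the ideal generated by the $G_i$.

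Concretely, I would spell out the argument as follows. First invoke Corollary \ref{simpleinjective} to conclude that $i$ is injective, so $L_{n+1}$ is linearly isomorphic to its image $i(L_{n+1}) \subseteq U(L_{n+1})$. Next, observe that $i(L_{n+1})$ is closed under the alternating sum operation of $A^- = U(L_{n+1})^-$: indeed, for basis elements $e_{j_1},\dots,e_{j_n}$ the relation $i([e_{j_1},\dots,e_{j_n}]) = \mathrm{alt}(i(e_{j_1}),\dots,i(e_{j_n}))$ holds because the generators $G_i$ of $I$ are exactly the elements encoding $\mathrm{alt}(x_1,\dots,\widehat{x}_i,\dots,x_{n+1}) = y_i = \phi([e_1,\dots,\widehat{e_i},\dots,e_{n+1}])$, and these $n$-tuples of distinct basis vectors span all $n$-fold brackets by the alternating identity; extend multilinearly. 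Hence $i$ restricts to an isomorphism of $n$-Lie algebras onto the subalgebra $i(L_{n+1})$ of $A^-$.

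I do not expect any genuine obstacle here: the corollary is essentially a formal consequence of the two preceding corollaries together with the universal property, and the only thing to be careful about is the bookkeeping for multilinearity (that the relation on basis $n$-tuples of \emph{distinct} indices suffices, using the alternating identity to handle repeated indices, for which both sides vanish). The substantive mathematical content — that such an embedding exists at all — was already delivered by Theorem \ref{comproof} (the Gr\"obner basis property for $n$ even) and Corollary \ref{simpleinjective} (injectivity of $i$). This final corollary simply repackages those facts into the language of Pozhidaev's conjecture, so the proof will be a short paragraph rather than a computation.
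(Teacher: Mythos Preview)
Your proposal is correct and follows exactly the paper's approach: take $A = U(L_{n+1})$ and invoke Corollary \ref{simpleinjective}. The paper's own proof is a single sentence to this effect; your additional remarks about multilinearity and repeated indices are fine but unnecessary, since the homomorphism property of $i$ is already part of Definition \ref{uaedefinition}.
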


\begin{proof}
Take $A = U(L)$ and apply Corollary \ref{simpleinjective}.
\end{proof}

We also obtain the following new proof of Pozhidaev's Corollary 2.1
\cite{Pozhidaev}.

\begin{corollary}
Let $n \ge 3$ be odd, let $F$ be a field of characteristic $\ne 2$, and let $L$
be the simple $n$-Lie algebra $L_{n+1}$. If $A$ is an associative algebra and
$j\colon L \to A^-$ is a homomorphism of alternating $n$-ary algebras, then
$j(e_1)^2 + \cdots j(e_{n+1})^2 = 0$.
\end{corollary}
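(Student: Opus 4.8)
The plan is to transfer the structure identity from $L_{n+1}$ into $A$ via $j$ and read off the desired relation from the composition computed in Theorem~\ref{Nff}. First I would extend $j$ linearly to a map $\widetilde{\jmath}\colon F\langle X\rangle \to A$ by setting $\widetilde{\jmath}(x_k) = j(e_k)$ (using the bijection $\phi$ of Notation~\ref{ydefinition}) and extending multiplicatively. Because $j$ is a homomorphism of alternating $n$-ary algebras and the structure constants of $L_{n+1}$ give $y_i = (-1)^{n+i+1}x_i$, each generator $G_i$ of Definition~\ref{idealgenerators} is sent to
  \[
  \widetilde{\jmath}(G_i)
  =
  (-1)^{\lfloor n/2 \rfloor}
  \big(
  \mathrm{alt}\big(j(e_1), \dots, \widehat{j(e_i)}, \dots, j(e_{n+1})\big)
  -
  (-1)^{n+i+1} j(e_i)
  \big)
  = 0,
  \]
since $\mathrm{alt}(j(e_1),\dots,\widehat{j(e_i)},\dots,j(e_{n+1})) = j([e_1,\dots,\widehat{e_i},\dots,e_{n+1}]) = (-1)^{n+i+1}j(e_i)$. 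Hence $\widetilde{\jmath}$ kills every $G_k$, so it kills the ideal $I = \langle G_1,\dots,G_{n+1}\rangle$, and in particular it kills the composition $G_1 x_1 - x_{n+1}G_{n+1}$ and its normal form $N$.

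Next I would invoke Theorem~\ref{Nff} together with the computation in the proof of Theorem~\ref{comproof}: for the simple algebra $L_{n+1}$, the normal form of the composition simplifies to $N = \big(1 - (-1)^n\big)\sum_{i=1}^{n+1} x_i^2$, which for $n$ odd equals $2\sum_{i=1}^{n+1} x_i^2$. Applying $\widetilde{\jmath}$ to $N = 0 \in I$ gives
  \[
  0 = \widetilde{\jmath}(N) = 2 \sum_{i=1}^{n+1} \widetilde{\jmath}(x_i)^2 = 2\big(j(e_1)^2 + \cdots + j(e_{n+1})^2\big),
  \]
and since $\mathrm{char}\,F \ne 2$ we may cancel the factor $2$ to conclude $j(e_1)^2 + \cdots + j(e_{n+1})^2 = 0$. (Strictly, $\widetilde{\jmath}$ need not be an algebra homomorphism on all of $F\langle X\rangle$ if $j$ is only a linear map, but it is multiplicative on monomials by construction, which is all that is needed to evaluate $N$, since $N$ is written explicitly in terms of the $x_i$.)

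The only delicate point — and the one I would be careful about — is the bookkeeping showing $\widetilde{\jmath}(I) = 0$, i.e.\ that $\widetilde{\jmath}$ respects the two-sided ideal structure: one needs $\widetilde{\jmath}(a G_k b) = 0$ for all $a, b \in F\langle X\rangle$, which follows because $\widetilde{\jmath}$ is a homomorphism from the \emph{free} associative algebra $F\langle X\rangle$ to $A$ (it is determined on generators, hence genuinely multiplicative), so it factors through $U(L) = F\langle X\rangle/I$. This is really just the universal property packaged in Definition~\ref{uaedefinition}, so no genuine obstacle arises; the proof is short once one notes that $N$ lies in $I$ and that $\widetilde{\jmath}$ annihilates $I$.
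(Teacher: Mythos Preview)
Your argument is correct and is essentially the paper's own proof unpacked: the paper simply observes that $x_1^2 + \cdots + x_{n+1}^2 = 0$ in $U(L)$ (from Theorem~\ref{comproof}) and invokes the universal property, which is exactly your construction of $\widetilde{\jmath}$ and the observation that it factors through $U(L)$. One small point of presentation: drop the parenthetical worrying that $\widetilde{\jmath}$ ``need not be an algebra homomorphism'' --- as you yourself note in the final paragraph, a map on generators of a free associative algebra extends uniquely to an algebra homomorphism, so $\widetilde{\jmath}$ is genuinely multiplicative and kills the two-sided ideal $I$; there is no subtlety there.
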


\begin{proof}
The proof of Theorem \ref{comproof} shows that $x_1^2 + \cdots + x_{n+1}^2 = 0$
in $U(L)$, and so the claim follows from the universal property of $U(L)$.
\end{proof}

For $n$ odd, finding a Gr\"obner basis of $I = \langle G_1, \dots, G_{n+1}
\rangle$ for the simple $n$-Lie algebra $L_{n+1}$ seems to be much more
difficult; see the calculations in Section \ref{oddsection}.

%%%%%%%%%%%%%%%%%%%%%%%%%%%%%%%%%%%%%%%%%%%%%%%%%%%%%%%%%%%%%%%%%%%%%%%%

\section{The non-simple $n$-Lie algebras ($n$ even)} \label{nonsimplesection}

We now consider the other $(n{+}1)$-dimensional $n$-Lie algebras in the
classification of Theorem \ref{classf}. We divide these non-simple algebras
into three cases depending on the complexity of the resulting Gr\"obner basis.

\subsection{Case 1}

This includes cases (0), (1a), (2a) and ($r$) of Theorem \ref{classf}.

\begin{theorem}
Let $n \ge 4$ be even, let $F$ be an algebraically closed field of
characteristic 0, and let $L$ be an $(n{+}1)$-dimensional $n$-Lie algebra from
Theorem \ref{classf}. In the following four cases, the original ideal
generators $\{ G_1, \dots, G_{n+1} \}$ of Definition \ref{idealgenerators} are
a Gr\"obner basis for the ideal $I = \langle G_1, \dots, G_{n+1} \rangle
\subseteq F\langle X \rangle$:
  \begin{enumerate}
  \item[(0)] $L^1 = \{0\}$: $L$ is the Abelian $n$-Lie algebra.
  \item[(1)] \emph{(a)} $L^1 = F e_1$ where $[ e_2, \dots, e_{n+1} ] =
      e_1$.
  \item[(2)] \emph{(a)} $L^1 = F e_1 \oplus F e_2$ where $[ e_2, \dots,
      e_{n+1} ] = e_1$ and $[ e_1, e_3, \dots, e_{n+1} ] = e_2$.
  \item[($r$)] $L^1 = F e_1 \oplus \dots \oplus F e_r$ ($3 \le r \le n$)
      where $[ e_1, \dots, \widehat{e}_i, \dots, e_{n+1} ] = e_i$ for $1
      \le i \le r$.
  \end{enumerate}
\end{theorem}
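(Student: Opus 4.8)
The plan is to apply the criterion of Theorem \ref{di}: since the set $\{G_1,\dots,G_{n+1}\}$ is self-reduced (the leading monomials $\mathrm{LM}(G_i) = x_{n+1}\cdots\widehat{x}_i\cdots x_1$ are pairwise non-dividing and each $G_i$ is manifestly in normal form modulo the others), it suffices to check that the normal form of every composition among the generators is zero. By Lemma \ref{com} there is exactly one composition, namely $G_1 x_1 - x_{n+1} G_{n+1}$, and by Theorem \ref{Nff} its normal form equals
\[
N = (-1)^{n+1} \sum_{i=1}^{n+1} (-1)^i \big( x_i y_i - (-1)^n y_i x_i \big).
\]
Since $n$ is even, this simplifies to $N = -\sum_{i=1}^{n+1} (-1)^i ( x_i y_i - y_i x_i )$. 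So the entire problem reduces to computing, in each of the four cases, the polynomial $N$ from the structure constants and verifying that it lies in the ideal $I$ and in fact reduces to zero modulo $\{G_1,\dots,G_{n+1}\}$ — equivalently, that $N = 0$ outright once we substitute the values of the $y_i$.

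The key step, then, is to substitute the brackets from Theorem \ref{classf} into the expressions $y_i = \phi([e_1,\dots,\widehat{e}_i,\dots,e_{n+1}])$ and simplify. In case (0), every $y_i = 0$, so $N = 0$ trivially. In case (1a), we have $y_1 = x_1$ (from $[e_2,\dots,e_{n+1}]=e_1$) and $y_i = 0$ for $i \ge 2$; thus the only surviving term of $N$ is proportional to $x_1 y_1 - y_1 x_1 = x_1^2 - x_1^2 = 0$. In case (2a), $y_1 = x_1$, $y_2 = x_2$, and $y_i = 0$ for $i \ge 3$, and again each surviving term is of the form $x_i x_i - x_i x_i = 0$ (noting $x_1 y_1 = x_1^2 = y_1 x_1$ and similarly for the index $2$ term), so $N = 0$. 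In case ($r$), $y_i = x_i$ for $1 \le i \le r$ and $y_i = 0$ otherwise, so every nonzero term of $N$ is $x_i^2 - x_i^2 = 0$. In all four cases $N = 0$, so Theorem \ref{di} gives that $\{G_1,\dots,G_{n+1}\}$ is a Gr\"obner basis of $I$.

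I do not expect any serious obstacle here: the structural reason all four cases work is that each $y_i$ is a scalar multiple of the single variable $x_i$ (possibly zero), so each bracketed term $x_i y_i - y_i x_i$ in $N$ is a commutator of $x_i$ with a scalar multiple of itself, hence zero — the parity $n$ even is what turns the Jordan-type product $x_i G_i - (-1)^n G_i x_i$ into the Lie-type product $[x_i,G_i]$, as noted in the Remark after Theorem \ref{Nff}. The one point requiring a little care is that in case (2a) the bracket $[e_1,e_3,\dots,e_{n+1}] = e_2$ means $y_2 = \phi([e_1,e_3,\dots,e_{n+1}])$; one should check that the omitted index is indeed $2$ in Notation \ref{ydefinition}, so that $y_2 = x_2$ as claimed, and that no off-diagonal term $x_i y_j$ with $i \ne j$ appears in $N$ — but $N$ only ever pairs $x_i$ with $y_i$, so this is automatic. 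Thus the proof is essentially a short case-by-case substitution with no residual computation.
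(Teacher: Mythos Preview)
Your proposal is correct and follows essentially the same approach as the paper: invoke Theorem~\ref{di}, use Lemma~\ref{com} to reduce to the single composition, apply Theorem~\ref{Nff} for its normal form, and then verify case by case that $N=0$ because each $y_i$ is either $0$ or $x_i$, so each term $x_i y_i - y_i x_i$ vanishes. The paper's proof is terser but identical in substance; your added remark that the self-reducedness hypothesis of Theorem~\ref{di} is satisfied, and your structural explanation via $[x_i,\lambda x_i]=0$, are welcome clarifications but not a different route.
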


\begin{proof}
In each case we verify that the normal form $N$ of the unique composition of
the original ideal generators is equal to 0. This is trivial in case (0). In
case (1)(a), Theorem \ref{Nff} gives $N = x_1^2 - x_1^2 = 0$. In case (2)(a),
we get $N = ( x_1^2 - x_1^2 ) - ( x_2^2 - x_2^2 ) = 0$. In case ($r$), we get
$N = - \sum_{i=1}^r (-1)^i ( x_i^2 - x_i^2 ) = 0$. We note that in all these
cases, either $y_i = x_i$ or $y_i = 0$ for $i = 1, \dots, n{+}1$ .
\end{proof}

\subsection{Case 2}

This is case (1b) of Theorem \ref{classf}: $L^1 = F e_1$ where $[ e_1, \dots,
e_n ] = e_1$. The original ideal generators are
  \begin{align*}
  G_i
  &=
  (-1)^{\lfloor n/2 \rfloor}
  \mathrm{alt}
  (
  x_1, \dots, \widehat{x}_i, \dots, x_{n+1}
  )
  \quad
  (1 \le i \le n),
  \\
  G_{n+1}
  &=
  (-1)^{\lfloor n/2 \rfloor}
  \big(
  \mathrm{alt}
  (
  x_1, \dots, x_n
  )
  -
  x_1
  \big).
  \end{align*}

\begin{lemma}
The composition $G_1 x_1- x_{n+1} G_{n+1}$ has normal form
  \[
  N = x_{n+1} x_1 - x_1 x_{n+1}.
  \]
\end{lemma}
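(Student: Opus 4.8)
The plan is to derive this from Theorem~\ref{Nff} by substituting the structure constants of case (1b). First I would identify the elements $y_i$ of Notation~\ref{ydefinition} for this algebra. Since the only nonzero bracket is $[e_1,\dots,e_n]=e_1$, every bracket of the form $[e_1,\dots,\widehat{e}_i,\dots,e_{n+1}]$ with $1\le i\le n$ involves $e_{n+1}$ and therefore vanishes, so $y_i=0$ for $1\le i\le n$; on the other hand $[e_1,\dots,e_n]=e_1$ gives $y_{n+1}=\phi(e_1)=x_1$. These are exactly the values producing the generators $G_1,\dots,G_{n+1}$ displayed above. Note that $\mathrm{LM}(G_i)=x_{n+1}\cdots\widehat{x}_i\cdots x_1$ does not depend on the $y_i$, so Lemma~\ref{com} still applies and $G_1x_1-x_{n+1}G_{n+1}$ is the unique composition; likewise the set $\{G_1,\dots,G_{n+1}\}$ is still self-reduced, so Theorem~\ref{Nff} is applicable verbatim.

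Next I would substitute these $y_i$ into the second formula for $N$ in Theorem~\ref{Nff}. Since $n$ is even we have $(-1)^n=1$ and $(-1)^{n+1}=-1$, so
\[
N=(-1)^{n+1}\sum_{i=1}^{n+1}(-1)^i\big(x_iy_i-y_ix_i\big)=-\sum_{i=1}^{n+1}(-1)^i\big(x_iy_i-y_ix_i\big).
\]
All summands with $1\le i\le n$ vanish because $y_i=0$, and the $i=n+1$ summand equals $-(-1)^{n+1}\big(x_{n+1}x_1-x_1x_{n+1}\big)=x_{n+1}x_1-x_1x_{n+1}$ (using $(-1)^{n+1}=-1$ twice, once from the outer factor and once from the index $i=n+1$). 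Hence $N=x_{n+1}x_1-x_1x_{n+1}$. Theorem~\ref{Nff} already certifies that this expression is the normal form modulo $\{G_1,\dots,G_{n+1}\}$; if one wishes to check it independently, it suffices to observe that the two monomials $x_{n+1}x_1$ and $x_1x_{n+1}$ have degree $2<n$ and so cannot contain any $\mathrm{LM}(G_j)$, which has degree $n$, as a factor.

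There is essentially no obstacle here: the only real content is the observation that for case (1b) the tuple $(y_1,\dots,y_{n+1})$ collapses to $(0,\dots,0,x_1)$, after which Theorem~\ref{Nff} supplies the answer immediately. The one place to be careful is the cumulative sign coming from the outer factor $(-1)^{n+1}$ and from the index $(-1)^{n+1}$ at $i=n+1$; tracking that correctly is what produces $+(x_{n+1}x_1-x_1x_{n+1})$ rather than its negative. (Since $N\neq 0$, the subsequent development will have to enlarge the generating set by $N$ and re-examine the compositions, but that is beyond the statement of this lemma.)
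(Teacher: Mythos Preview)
Your proof is correct and follows exactly the paper's approach: the paper simply says ``This follows directly from Theorem~\ref{Nff},'' and you have spelled out precisely that derivation by identifying $y_i=0$ for $i\le n$ and $y_{n+1}=x_1$ and substituting into the formula of Theorem~\ref{Nff}. Your sign-tracking is accurate and the additional remark that the degree-$2$ monomials cannot contain any $\mathrm{LM}(G_j)$ as a factor is a harmless (if redundant) confirmation.
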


\begin{proof}
This follows directly from Theorem \ref{Nff}.
\end{proof}

We must include $N$ as a new generator and modify the original generators by
replacing them by their normal forms modulo $N$.

\begin{notation}
For $i = 2, \dots, n$ we write $T_n^{(i)}$ for the set of all permutations of
$\{ 1$, \dots, $\widehat{i}$, \dots, $n{+}1 \}$ in which 1 and $n{+}1$ do not
appear consecutively. We consider the following corresponding elements of
$F\langle X \rangle$:
  \[
  H_i
  =
  (-1)^{\lfloor n/2 \rfloor}
  \sum_{\sigma \in T_n^{(i)}}
  \epsilon(\sigma) \,
  x_{\sigma(1)} \, x_{\sigma(2)} \, \cdots \, x_{\sigma(n)}
  \quad
  (2 \le i \le n).
  \]
\end{notation}

\begin{theorem}
Let $n \ge 4$ be even and let $F$ be any field. Let $L$ be the
$(n{+}1)$-dimensional $n$-Lie algebra with structure constants $[ e_1, \dots,
e_n ] = e_1$. A Gr\"obner basis for the ideal $I = \langle G_1, \dots, G_{n+1}
\rangle \subseteq F\langle X \rangle$ consists of the elements
  \[
  \{ \, G_1, \, H_2, \, \dots, \, H_n, \, G_{n+1}, \, N \, \}.
  \]
\end{theorem}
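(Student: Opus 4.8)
The goal is to verify that the modified set $\{G_1, H_2, \dots, H_n, G_{n+1}, N\}$ is a Gr\"obner basis for $I$, using Theorem~\ref{di}. The plan is to proceed in four stages: (1) check that this new set generates the same ideal $I$; (2) check that it is self-reduced; (3) enumerate all compositions among its elements and their leading monomials; (4) verify that every composition reduces to $0$ modulo the set. First I would confirm that $H_i$ is the normal form of $G_i$ modulo $\{N\}$ (equivalently, modulo the relation $x_{n+1}x_1 = x_1 x_{n+1}$): in $\mathrm{alt}(x_1,\dots,\widehat{x}_i,\dots,x_{n+1})$, the monomials come in pairs differing by a transposition of adjacent occurrences of $x_1$ and $x_{n+1}$, and such a pair cancels after reduction modulo $N$ because the two monomials become equal but carry opposite signs $\epsilon(\sigma)$. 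Hence reducing $G_i$ modulo $N$ kills exactly the monomials in which $1$ and $n{+}1$ appear consecutively, leaving $H_i$ (up to the overall sign). Since $G_i$ and $H_i$ differ by an element of $\langle N\rangle \subseteq I$, and $N\in I$ by the previous lemma, the new set generates $I$.

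For self-reducedness and the leading-monomial bookkeeping, I would record: $\mathrm{LM}(G_1) = x_{n+1}\cdots x_2$ (degree $n$), $\mathrm{LM}(G_{n+1}) = x_n\cdots x_1$ (degree $n$), $\mathrm{LM}(N) = x_{n+1}x_1$ (degree $2$), and $\mathrm{LM}(H_i)$ is the deglex-largest permutation of $\{1,\dots,\widehat{i},\dots,n{+}1\}$ avoiding adjacency of $1$ and $n{+}1$ --- concretely $x_{n+1}\cdots x_{i+1}\,x_{i-1}\cdots x_2\,x_1\,\cdots$ with $x_1$ pushed just far enough right to separate it from $x_{n+1}$; I would write this monomial out explicitly for $i=2$ and for general $i$. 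Then I must check no leading monomial is a factor of another: $\mathrm{LM}(N)=x_{n+1}x_1$ is not a factor of $\mathrm{LM}(H_i)$ precisely because the $H_i$ were built to avoid that adjacency, and not a factor of $\mathrm{LM}(G_1)$ or $\mathrm{LM}(G_{n+1})$ since those monomials are strictly decreasing and contain $x_{n+1}x_n$ rather than $x_{n+1}x_1$ as their relevant prefix (when $n\ge 4$). The remaining divisibility checks among $G_1, H_i, G_{n+1}$ are similar in spirit to Lemma~\ref{com}.

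The heart of the proof is step~(4): listing the overlaps and showing each composition has normal form $0$. The expected overlaps are (a) $\mathrm{LM}(G_1)$ with $\mathrm{LM}(G_{n+1})$ along the common factor $x_n\cdots x_2$, exactly as in Lemma~\ref{com} --- but now the composition $G_1 x_1 - x_{n+1}G_{n+1}$ reduces, via the $H_i$ and $N$, through the same telescoping as Theorem~\ref{Nff} down to $N$ itself, which is in the basis, so its normal form is $0$; (b) overlaps of $\mathrm{LM}(N)=x_{n+1}x_1$ with the other leading monomials, which occur wherever $x_{n+1}$ is the first symbol and $x_1$ appears later after a gap, or wherever a leading monomial ends in $x_{n+1}x_1$-adjacent position --- for $\mathrm{LM}(G_1)=x_{n+1}\cdots x_2$ there is an overlap at the initial $x_{n+1}$ (with $u = x_1$, $v = $ the rest), giving a composition that must be reduced; (c) overlaps of $\mathrm{LM}(N)$ with the various $\mathrm{LM}(H_i)$ at their leading $x_{n+1}$. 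I would organize the reductions by exploiting the identity established in Theorem~\ref{Nff} --- most of the cancellation is the same combinatorial telescoping ($R=0$), and the extra terms introduced by replacing $G_i$ with $H_i$ are exactly accounted for by multiples of $N$.

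\textbf{Main obstacle.} The delicate point will be checking that the finitely many compositions involving $N$ all reduce to zero without generating further new leading monomials --- in other words, that the process of adjoining $N$ and reducing the $G_i$ to the $H_i$ terminates after one round. I expect this to follow because every composition of $N$ with a degree-$n$ generator has degree at most $n{+}1$, and after reduction modulo $\{G_1, H_2,\dots,H_n,G_{n+1},N\}$ its leading term is controlled: the commutator structure $N = x_{n+1}x_1 - x_1 x_{n+1}$ means the $N$-compositions just re-express "$x_{n+1}$ moved past $x_1$'' consistently, and the $H_i$ were defined precisely to be stable under this move. Verifying this stability explicitly --- that no monomial of any reduced composition reintroduces a forbidden $x_{n+1}\cdots x_1$-type factor --- is the one place where a careful, if routine, case analysis on the position of $x_1$ relative to $x_{n+1}$ in each monomial is unavoidable.
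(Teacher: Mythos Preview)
Your overall strategy---show the new set generates $I$, check self-reducedness, list all compositions, reduce each to $0$---is exactly the paper's approach, but you have miscomputed $\mathrm{LM}(H_i)$, and this leads you to anticipate difficulties that do not exist. For $2 \le i \le n$ the monomial $\mathrm{LM}(G_i) = x_{n+1} x_n \cdots \widehat{x}_i \cdots x_2 x_1$ already has $x_{n+1}$ in the first position and $x_1$ in the last (the $n$-th) position; since $n \ge 4$ these are \emph{not} adjacent, so this permutation lies in $T_n^{(i)}$ and survives in $H_i$. Hence $\mathrm{LM}(H_i) = \mathrm{LM}(G_i)$, a strictly decreasing word. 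Nothing gets ``pushed.''

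With this correction your steps (3) and (4) collapse. Every leading monomial among $G_1, H_2, \dots, H_n, G_{n+1}$ is a strictly decreasing word, so none begins with $x_1$ and none ends with $x_{n+1}$; therefore $\mathrm{LM}(N) = x_{n+1}x_1$ has \emph{no} overlap with any of them. (Your items (b) and (c) misread the overlap condition: an overlap requires a proper suffix of one leading monomial to equal a proper prefix of the other, not merely that $x_{n+1}$ appear first and $x_1$ appear somewhere later.) By the same reasoning as Lemma~\ref{com} the only overlap among the degree-$n$ generators is still the original $G_1$ versus $G_{n+1}$, and you correctly note in (a) that this composition reduces to $N$ and hence now to $0$. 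The ``main obstacle'' you flag---showing the process closes after one round---is therefore vacuous: there are simply no $N$-compositions to analyze.
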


\begin{proof}
We have $\mathrm{LM}(N) = x_{n+1} x_1$ and obviously this never occurs as a
factor of any monomial in $G_1$ or $G_{n+1}$. If $x_{n+1} x_1$ is a factor of a
term $\epsilon w = \pm u x_{n+1} x_1 v$ occurring in $G_i$ for some $i = 2,
\dots, n$, then we reduce $w$ using $N$. This simply means that we replace
$\epsilon w$ by $\epsilon w' = \pm u x_1 x_{n+1} v$. But since $G_i$ is an
alternating sum, the term $-\epsilon w'$ appears in $G_i$, and the terms
$\epsilon w'$ and $-\epsilon w'$ cancel. The remaining terms in $G_i$
correspond to the permutations in $T_n^{(i)}$ and so we obtain the new
generators $H_2, \dots, H_n$. No further reductions are possible in the set of
generators: the set $\{ G_1, H_2, \dots, H_n, G_{n+1}, N \}$ is self-reduced.
The leading monomials of the generators $G_1, H_2, \dots, H_n, G_{n+1}$ have
strictly decreasing subscripts, and hence never have $x_1$ as the first symbol
or $x_{n+1}$ as the last symbol; it follows that no further compositions with
$N$ are possible. Hence we now have a Gr\"obner basis for the ideal $I$.
\end{proof}

\subsection{Case 3}
This is case (2b) of Theorem \ref{classf}: $L^1 = F e_1 \oplus F e_2$ where
  \[
  [ e_2, \dots, e_{n+1} ] = e_1 + \beta e_2 \; (\beta \ne 0),
  \qquad
  [ e_1, e_3, \dots, e_{n+1} ] = e_2.
  \]
The original ideal generators are
  \begin{align*}
  G_1
  &=
  (-1)^{\lfloor n/2 \rfloor}
  \big(
  \mathrm{alt}
  (
  x_2, \dots, x_{n+1}
  )
  -
  ( x_1 +\beta x_2 )
  \big),
  \\
  G_2
  &=
  (-1)^{\lfloor n/2 \rfloor}
  \big(
  \mathrm{alt}
  (
  x_1, x_3, \dots, x_{n+1}
  )
  -
  x_2
  \big),
  \\
  G_i
  &=
  (-1)^{\lfloor n/2 \rfloor}
  \mathrm{alt}
  (
  x_1, \dots, \widehat{x}_i, \dots, x_{n+1}
  )
  \quad
  (3 \le i \le n{+}1).
  \end{align*}

\begin{lemma}
The composition $G_1 x_1- x_{n+1} G_{n+1}$ has normal form
  \[
  N = x_2 x_1 - x_1 x_2.
  \]
\end{lemma}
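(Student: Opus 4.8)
The plan is to derive this directly from Theorem \ref{Nff}, specialized to the structure constants of Case (2b). First I would record the linear polynomials $y_i$ of Notation \ref{ydefinition}: from $[e_2,\dots,e_{n+1}] = e_1 + \beta e_2$ we get $y_1 = x_1 + \beta x_2$; from $[e_1,e_3,\dots,e_{n+1}] = e_2$ we get $y_2 = x_2$; and since all remaining brackets vanish, $y_i = 0$ for $3 \le i \le n{+}1$. I would then observe that the perturbing terms $x_1 + \beta x_2$ and $x_2$ have degree $1 < n$, so they do not affect any leading monomial; hence $\mathrm{LM}(G_i) = x_{n+1}\cdots \widehat{x}_i \cdots x_1$ exactly as in Section \ref{alternatingsection}, Lemma \ref{com} applies unchanged, and $G_1 x_1 - x_{n+1} G_{n+1}$ is the unique composition, whose normal form is computed by Theorem \ref{Nff}.

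Next I would substitute into the formula $N = (-1)^{n+1}\sum_{i=1}^{n+1}(-1)^i\big(x_i y_i - (-1)^n y_i x_i\big)$ from Theorem \ref{Nff}. Since $n$ is even, $(-1)^n = 1$, and every summand collapses to $(-1)^{n+1}(-1)^i [x_i, y_i]$. The summands with $i \ge 3$ vanish because $y_i = 0$; the $i = 2$ summand vanishes because $[x_2, x_2] = 0$; and the $i = 1$ summand equals $[x_1, x_1 + \beta x_2] = \beta [x_1, x_2] = \beta(x_1 x_2 - x_2 x_1)$, the powers of $-1$ involving $n$ combining to $+1$. Thus $N = \beta(x_1 x_2 - x_2 x_1)$.

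Finally, since $\beta \ne 0$, I would rescale to obtain the monic generator that is adjoined to the generating set: under deglex $x_1 x_2 < x_2 x_1$, so $\mathrm{LM}(N) = x_2 x_1$ with coefficient $-\beta$, and dividing $N$ by $-\beta$ gives $x_2 x_1 - x_1 x_2$, as stated. I do not expect any genuine obstacle here; the lemma is a straightforward specialization of Theorem \ref{Nff}, and the only points demanding a little care are the sign bookkeeping (all the $(-1)^n$ and $(-1)^{n+1}$ factors disappear because $n$ is even) and the rescaling by $-\beta$ — needed, in contrast to Case 2, to bring $N$ into monic form.
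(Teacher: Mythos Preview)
Your proof is correct and takes essentially the same approach as the paper, which simply states ``This follows directly from Theorem \ref{Nff} since $\beta \ne 0$.'' Your treatment is in fact more careful: you make explicit that the literal normal form produced by Theorem \ref{Nff} is $\beta(x_1x_2 - x_2x_1)$ and that the nonvanishing of $\beta$ is what allows the rescaling to the monic generator $x_2x_1 - x_1x_2$, a point the paper leaves implicit.
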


\begin{proof}
This follows directly from Theorem \ref{Nff} since $\beta \ne 0$.
\end{proof}

We must include $N$ as a new generator and modify the original generators by
replacing them by their normal forms modulo $N$.

\begin{notation} \label{VKnotation}
We write $V_n^{(i)}$ ($i \ne 1, 2$) for the set of permutations of $\{ 1,
\dots, \widehat{i}$, $\dots$, $n{+}1 \}$ in which 1 and 2 do not appear
consecutively. We consider the corresponding elements of $F\langle X \rangle$:
  \[
  K_i
  =
  -(-1)^{\lfloor n/2 \rfloor}
  \sum_{\sigma \in V_n^{(i)}}
  \epsilon(\sigma) \,
  x_{\sigma(1)} \, x_{\sigma(2)} \, \cdots \, x_{\sigma(n)}
  \quad
  (3 \le i \le n{+}1).
  \]
The extra minus sign appears because $\mathrm{LM}(K_i)$ differs by a
transposition from $\mathrm{LM}(G_i)$: the leading monomial of $K_i$ is
  \[
  x_{n+1} \cdots x_5 x_2 x_4 x_1 \; (i = 3),
  \qquad
  x_{n+1} \cdots \widehat{x}_i \cdots x_4 x_2 x_3 x_1 \; (i \ge 4).
  \]
We write $V_{n+1}$ for the subset of $S_{n+1}$ in which 1 and 2 do not appear
consecutively.
\end{notation}

\begin{theorem}
Let $n \ge 4$ be even and let $F$ be any field. Let $L$ be the
$(n{+}1)$-dimensional $n$-Lie algebra with structure constants
  \[
  [ e_2, \dots, e_{n+1} ] = e_1 + \beta e_2 \; (\beta \ne 0),
  \qquad
  [ e_1, e_3, \dots, e_{n+1} ] = e_2.
  \]
A Gr\"obner basis for the ideal $I = \langle G_1, \dots, G_{n+1} \rangle
\subseteq F\langle X \rangle$ consists of the elements
  \[
  \{ \, G_1, G_2, K_3, \dots, K_{n+1}, N \, \}.
  \]
\end{theorem}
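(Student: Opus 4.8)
The plan is to imitate the structure of the proof in Case 2, adapted to the transposition $(1\,2)$ in place of $(1\,n{+}1)$. First I would verify that the stated set $\{G_1,G_2,K_3,\dots,K_{n+1},N\}$ is exactly what one obtains from the original generators by the standard Gr\"obner completion step: we adjoin the composition $N$ (the lemma above gives $N = x_2x_1 - x_1x_2$, with $\mathrm{LM}(N) = x_2x_1$), and then self-reduce the old generators modulo $N$. As in Case 2, $\mathrm{LM}(N) = x_2 x_1$ cannot be a factor of any monomial in $G_1$ (whose monomials involve only $x_2,\dots,x_{n+1}$, hence no $x_1$) or in $G_2$ (whose monomials involve only $x_1,x_3,\dots,x_{n+1}$, hence no $x_2$). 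For $i \ge 3$, whenever a monomial of $G_i$ contains $x_2x_1$ as a factor, reducing it via $N$ swaps these two letters, producing $\pm u\,x_1x_2\,v$; but $G_i$ is an alternating sum over $S_n^{(i)}$, so the monomial $\mp u\,x_1x_2\,v$ already occurs in $G_i$ and the two cancel. Iterating, the surviving terms of $G_i$ after full reduction are precisely those indexed by $V_n^{(i)}$ — permutations in which $1$ and $2$ are non-adjacent — which (up to the sign bookkeeping explained in Notation \ref{VKnotation}, accounting for the fact that $\mathrm{LM}(K_i)$ is the $(1\,2)$-transpose of $\mathrm{LM}(G_i)$ and so demands an extra $-1$ to stay monic) is exactly $K_i$. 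Since the $y_i$ terms here involve only $x_1, x_2$ (for $G_1, G_2$) these are untouched. This shows $\langle G_1,G_2,K_3,\dots,K_{n+1},N\rangle = I$ and that the set is self-reduced.

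Next I would apply Theorem \ref{di}: it remains to check that every composition of pairs from $\mathcal{G} := \{G_1,G_2,K_3,\dots,K_{n+1},N\}$ reduces to $0$ modulo $\mathcal{G}$. The leading monomials here are: $\mathrm{LM}(G_1) = x_{n+1}\cdots x_2$, $\mathrm{LM}(G_2) = x_{n+1}\cdots x_3 x_1$, $\mathrm{LM}(K_i) = x_{n+1}\cdots\widehat{x_i}\cdots x_4 x_2 x_3 x_1$ for $i \ge 4$ (and $x_{n+1}\cdots x_5 x_2 x_4 x_1$ for $i=3$), and $\mathrm{LM}(N) = x_2 x_1$. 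I would enumerate all overlaps. The key observation, exactly as in Lemma \ref{com} and Case 2, is that the leading monomials of $G_1, G_2, K_3, \dots, K_{n+1}$ all have strictly decreasing subscripts except for the single adjacent inversion $x_2 x_3$ (resp. $x_2 x_4$ for $K_3$) sitting just before the final $x_1$; in particular none of them begins with $x_1$ and none contains $x_2 x_1$ as a factor, so there are no compositions of any of $G_1, G_2, K_i$ with $N$. This leaves only overlaps among $G_1, G_2, K_3, \dots, K_{n+1}$ themselves. Because each $\mathrm{LM}$ is (almost) the decreasing word on its index set, the analysis of which suffixes/prefixes can overlap is the same combinatorial bookkeeping as in the original Lemma \ref{com}: essentially one candidate composition, of the same shape as $G_1 x_1 - x_{n+1} G_{n+1}$ but now phrased with $K_{n+1}$ (and with $G_2, K_3$ entering where the $x_2 x_3$ inversion forces an extra overlap position). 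For each such composition I would compute the normal form by the same device used in the proof of Theorem \ref{Nff} — writing it as a telescoping signed sum of $x_j K_j - K_j x_j$ and $x_j G_j - G_j x_j$ terms (using that $n$ is even, so $(-1)^n = 1$ and these are commutators) plus the $N$-correction — and observe that the polynomial part cancels as in the $R = 0$ computation there, while the $y$-part contributes only multiples of $x_1^2$, $x_2^2$, and $x_2 x_1 - x_1 x_2$, all of which reduce to $0$ modulo $\mathcal{G}$ (the latter being $N$ itself, the former because $x_1^2$ and $x_2^2$ appear with coefficient $1-(-1)^n = 0$ when $n$ is even, just as in Theorem \ref{comproof} and the Case 1 proof).

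The main obstacle I anticipate is the composition bookkeeping in the presence of the inversion $x_2 x_3$ (and $x_2 x_4$ in $K_3$): unlike the original generators, whose leading monomials are perfectly decreasing words, the $\mathrm{LM}(K_i)$ have this one forced inversion near the right end, which can create a second family of self-overlaps (a prefix of $\mathrm{LM}(K_i)$ matching a suffix of $\mathrm{LM}(K_j)$ through the $x_2 x_3 x_1$ tail) beyond the single composition of Lemma \ref{com}. Checking that all of these extra compositions also reduce to $0$ — rather than spawning yet more generators — is where the real work lies, and I would handle it by a careful case split on where the overlap window sits relative to the $\dots x_2 x_3 x_1$ (or $\dots x_2 x_4 x_1$) tail, reusing the telescoping identity of Theorem \ref{Nff} in each case; the fact that $n$ is even is used repeatedly to kill the square terms. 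Since the theorem is asserted with a clean finite Gr\"obner basis, I expect all these extra compositions to vanish, but verifying it is the crux.
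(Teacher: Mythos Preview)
Your first paragraph --- reducing $G_3,\dots,G_{n+1}$ modulo $N$ to obtain $K_3,\dots,K_{n+1}$, and noting that $G_1,G_2$ are unchanged --- is correct and matches the paper.

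The genuine gap is in your enumeration of compositions. You have transplanted the Case~2 argument verbatim, but the situation is different: in Case~2 one has $\mathrm{LM}(N)=x_{n+1}x_1$, and since no leading monomial of the degree-$n$ generators ends in $x_{n+1}$ or begins with $x_1$, there really are no compositions with $N$. Here, however, $\mathrm{LM}(N)=x_2x_1$, and $\mathrm{LM}(G_1)=x_{n+1}\cdots x_3x_2$ \emph{does} end in $x_2$. Hence there \emph{is} an overlap, producing the composition
\[
P \;=\; G_1\,x_1 \;-\; x_{n+1}x_n\cdots x_3\,N.
\]
Conversely, the overlaps you worry about among $G_1,G_2,K_3,\dots,K_{n+1}$ themselves do not exist: every one of these leading monomials begins with $x_{n+1}$, and $x_{n+1}$ occurs nowhere else in any of them, so no proper suffix of one can equal a proper prefix of another. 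Thus $P$ is the \emph{unique} composition to be analysed, and your entire ``main obstacle'' (the speculative case split on the $x_2x_3$ inversion in $\mathrm{LM}(K_i)$) is addressing a phantom.

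The paper's proof is then devoted to showing that this single composition $P$ has normal form $0$. This is done by a calculation in the spirit of Theorem~\ref{Nff}: one writes the reduction of $P$ as $P+Q+R=A+B+C$, where $A$ collects the $G_1,G_2$ contributions, $B$ the $K_i$ contributions, and $C$ the $N$-corrections (including a $\beta N$ term coming from $y_1 = x_1+\beta x_2$), and verifies that the degree-$(n{+}1)$ part telescopes to zero over the set $V_{n+1}$ of permutations with $1,2$ non-adjacent, exactly as $R=0$ in Theorem~\ref{Nff}. Your instinct that the endgame is a telescoping identity of this kind is right; you just aimed it at the wrong composition.
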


\begin{proof}
We first use $N$ to reduce the original generators $G_1, \dots, G_{n+1}$.
Clearly $G_1$ and $G_2$ do not change, since $G_1$ (resp.~$G_2$) does not
contain $x_1$ (resp.~$x_2$). The monomials in $G_3, \dots, G_{n+1}$ of the form
$\cdots x_2 x_1 \cdots$ reduce to $\cdots x_1 x_2 \cdots$; hence all the
monomials containing $x_2 x_1$ and $x_1 x_2$ cancel, and $G_3, \dots, G_{n+1}$
reduce to $K_3, \dots, K_{n+1}$. It is easy to check that $G_1$, $G_2$, $K_3$,
$\dots$, $K_{n+1}$, $N$ have only one overlap among their leading monomials:
$\mathrm{LM}(G_1) = x_{n+1} \cdots x_2$, $\mathrm{LM}(N) = x_2 x_1$. Hence
there is a single new composition,
  \[
  P = G_1 x_1 - x_{n+1} x_n \dots x_3 N.
  \]
To complete the proof, it suffices to show that the normal form of $P$ is 0.

Following the proof of Theorem \ref{Nff}, we first eliminate from $P$ all
occurrences of the leading monomials of $G_2$, $K_3$, $\dots$, $K_{n+1}$, $N$.
This gives $P + Q$ where
  \[
  Q
  =
  -
  G_2 x_2
  +
  x_2 G_2
  +
  \sum_{i=3}^{n+1}
  (-1)^{i+1}
  x_i K_i
  +
  (-1)^{n/2}
  \Bigg[
  \beta
  N
  -
  \!\!\!\!\!\!\!\!
  \sum_{\tiny
  \begin{array}{c} \tau \in S_{n+1} \\ \tau(n) = 2 \\ \tau(n{+}1)=1 \end{array}}
  \!\!\!\!\!\!\!\!
  {\epsilon(\tau)}
  x_{\tau(1)} \cdots x_{\tau(n-1)} N
  \Bigg].
  \]
We next eliminate from $P + Q$ all occurrences of the leading monomials of
$G_1$, $K_3$, $\dots$, $K_{n+1}$, $N$. This gives $P + Q + R$ where
  \[
  R
  =
  -
  x_1 G_1
  -
  \sum_{i=3}^{n+1}
  (-1)^{i+1}
  K_i x_i
  +
  (-1)^{n/2}
  \!\!\!\!\!\!\!\!
  \sum_{\tiny
  \begin{array}{c} \tau \in S_{n+1} \\ \tau(1) = 2 \\ \tau(2) = 1 \end{array}}
  \!\!\!\!\!\!\!\!
  \epsilon(\tau)
  N x_{\tau(3)} \cdots x_{\tau(n+1)}.
  \]
This shows that $P$ reduces to $M = P + Q + R$.  It remains to show that $M =
0$.

Combining the terms in $P$, $Q$, $R$ we obtain $M = A + B + C$ where
  \allowdisplaybreaks
  \begin{align*}
  A
  &=
  \sum_{i=1}^2 (-1)^{i+1} ( G_i x_i - x_i G_i ),
  \qquad
  B
  =
  \sum_{i=3}^{n+1} (-1)^{i+1} ( x_i K_i - K_i x_i ),
  \\
  C
  &=
  (-1)^{n/2}
  \Bigg[
  \beta N +
  \!\!\!\!\!\!\!
  \sum_{\tiny\begin{array}{c} \tau \in S_{n+1} \\ \tau(1) = 2 \\ \tau(2) = 1 \end{array}}
  \!\!\!\!\!\!\!
  \epsilon(\tau)
  N x_{\tau(3)} \cdots x_{\tau(n+1)}
  -
  \!\!\!\!\!\!\!
  \sum_{\tiny\begin{array}{c} \tau \in S_{n+1} \\ \tau(n) = 2 \\ \tau(n{+}1)=1 \end{array}}
  \!\!\!\!\!\!\!
  \epsilon(\tau)
  x_{\tau(1)} \cdots x_{\tau(n-1)} N
  \Bigg].
  \end{align*}
We factor out $(-1)^{n/2}$ from the following calculation to simplify the signs.

Using the definitions of the ideal generators, we rewrite $A$ as follows:
  \allowdisplaybreaks
  \begin{align*}
  A
  &=
  \sum_{i=1}^2
  \Bigg[
  \sum_{\tiny \begin{array}{c} \tau \in S_{n+1} \\ \tau(n{+}1) = i \end{array}}
  \!\!\!\!\!\!
  \epsilon(\tau)
  x_{\tau(1)} \cdots x_{\tau(n)} x_i
  -
  \!\!\!\!\!\!
  \sum_{\tiny \begin{array}{c} \tau \in S_{n+1} \\ \tau(1) = i \end{array}}
  \!\!\!\!\!\!
  \epsilon(\tau)
  x_i x_{\tau(2)} \cdots x_{\tau(n+1)}
  \Bigg]
  \\
  &\quad
  - \beta (x_2 x_1 - x_1 x_2).
  \end{align*}
The signs $(-1)^{i+1}$ cancel using equations \eqref{xiGi} and \eqref{Gixi}. We
now separate the monomials which either begin or end with either $x_1 x_2$ or
$x_2 x_1$:
  \begin{align*}
  A
  &=
  \sum_{i=1}^2
  \Bigg[
  \sum_{\tiny \begin{array}{c} \tau \in V_{n+1} \\ \tau(n{+}1) = i \end{array}}
  \!\!\!\!\!\!
  \epsilon(\tau)
  x_{\tau(1)} \cdots x_{\tau(n)} x_i
  -
  \!\!\!\!\!\!
  \sum_{\tiny \begin{array}{c} \tau \in V_{n+1} \\ \tau(1) = i \end{array}}
  \!\!\!\!\!\!
  \epsilon(\tau)
  x_i x_{\tau(2)} \cdots x_{\tau(n+1)}
  \Bigg]
  \\
  &\quad
  +
  \!\!\!\!\!\!
  \sum_{\tiny \begin{array}{c} \tau \in S_{n+1} \\ \tau(n) = 2 \\ \tau(n{+}1)=1 \end{array}}
  \!\!\!\!\!\!
  {\epsilon(\tau)}
  x_{\tau(1)} \cdots x_{\tau(n-1)}(x_2x_1-x_1x_2)
  \\
  &\quad
  -
  \!\!\!\!\!\!
  \sum_{\tiny \begin{array}{c} \tau \in S_{n+1} \\ \tau(1) = 2 \\ \tau(2) = 1 \end{array}}
  \!\!\!\!\!\!
  \epsilon(\tau)
  (x_2x_1 - x_1 x_2) x_{\tau(3)} \cdots x_{\tau(n+1)}
  -
  \beta (x_2 x_1 - x_1 x_2).
  \end{align*}

Similarly, we obtain
  \[
  B
  =
  \sum_{i=3}^{n+1}
  \Bigg[
  \sum_{\tiny \begin{array}{c} \tau \in V_{n+1} \\ \tau(n{+}1) = i \end{array}}
  \!\!\!\!\!\!
  \epsilon(\tau)
  x_{\tau(1)} \cdots x_{\tau(n)} x_i
  -
  \!\!\!\!\!\!
  \sum_{\tiny \begin{array}{c} \tau \in V_{n+1} \\ \tau(1) = i \end{array}}
  \!\!\!\!\!\!
  \epsilon(\tau)
  x_i x_{\tau(2)} \cdots x_{\tau(n+1)}
  \Bigg],
  \]
using the same relation between $\epsilon(\sigma)$ and $\epsilon(\tau)$ as in
equations \eqref{xiGi} and \eqref{Gixi}.

Since $N = x_2 x_1 - x_1 x_2$ we obtain
  \begin{align*}
  C
  &=
  \sum_{\tiny \begin{array}{c} \tau \in S_{n+1} \\ \tau(1) = 2 \\ \tau(2) = 1 \end{array}}
  \!\!\!\!\!\!
  \epsilon(\tau)
  (x_2 x_1- x_1 x_2) x_{\tau(3)} \cdots x_{\tau(n+1)}
  \\
  &\quad
  -
  \!\!\!\!\!\!
  \sum_{\tiny \begin{array}{c} \tau \in S_{n+1} \\ \tau(n) = 2 \\ \tau(n{+}1)=1 \end{array}}
  \!\!\!\!\!\!
  {\epsilon(\tau)}
  x_{\tau(1)} \cdots x_{\tau(n-1)} (x_2 x_1 - x_1 x_2)
  +
  \beta (x_2x_1- x_1 x_2).
  \end{align*}
Adding the last three expressions for $A$, $B$ and $C$ gives
  \[
  M
  =
  \sum_{i=1}^{n+1}
  \!\!\!\!
  \sum_{\tiny \begin{array}{c} \tau \in V_{n+1} \\ \tau(n{+}1) = i \end{array}}
  \!\!\!\!\!\!
  \epsilon(\tau)
  x_{\tau(1)} \cdots x_{\tau(n)} x_i
  -
  \sum^{n+1}_{i =1}
  \!\!\!\!
  \sum_{\tiny \begin{array}{c} \tau \in V_{n+1} \\ \tau(1) = i \end{array}} \!\!\!\!\!\!
  \epsilon(\tau)
  x_i x_{\tau(2)} \cdots x_{\tau(n+1)}.
  \]
The argument at the end of the proof of Theorem \ref{Nff} now shows that $M =
0$.
\end{proof}

\begin{corollary}
Let $n \ge 4$ be even and let $L$ be any non-simple $(n+1)$-dimensional $n$-Lie
algebra over $F$. In cases (0), (1a), (2a) and ($r$) of Theorem \ref{classf}, a
basis of the universal associative envelope $U(L)$ consists of the monomials
which do not contain any factor of the form
  \[
  x_{i_1} x_{i_2} \cdots x_{i_n}
  \;
  (i_1 > i_2 > \cdots > i_n).
  \]
In case (1b) of Theorem \ref{classf}, a basis of the universal associative
envelope $U(L)$ consists of the monomials which do not contain any factor of
the form
  \[
  x_{n+1}x_{1}
  \qquad \text{or} \qquad
  x_{i_1} x_{i_2} \cdots x_{i_n}
  \;
  (i_1 > i_2 > \cdots > i_n).
  \]
In case (2b) of Theorem \ref{classf}, a basis of the universal associative
envelope $U(L)$ consists of the monomials which do not contain any factor of
the form
  \begin{align*}
  &
  x_2 x_1,
  \qquad
  x_{n+1} x_n \dots x_2,
  \qquad
  x_{n+1}x_n\dots x_3 x_1,
  \qquad
  \\
  &
  x_{n+1} \cdots x_5 x_2 x_4 x_1
  \qquad \text{or} \qquad
  x_{n+1} \cdots \widehat{x}_i \cdots x_4 x_2 x_3 x_1 \; (i \ge 4).
  \end{align*}
Hence in every case $U(L)$ is infinite-dimensional.
\end{corollary}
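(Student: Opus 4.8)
The plan is to deduce the corollary from the Gr\"obner basis results already established in this section, exactly as Corollary \ref{u} was deduced from Theorem \ref{comproof}. For each of the four families of non-simple $(n{+}1)$-dimensional $n$-Lie algebras, the preceding theorems identify an explicit Gr\"obner basis of the defining ideal $I \subseteq F\langle X\rangle$; by Proposition \ref{C(I)proposition} we have $F\langle X\rangle = C(I) \oplus I$, so a basis of $U(L) = F\langle X\rangle / I$ is given by the normal words $N(I)$, i.e.\ the monomials containing no factor equal to the leading monomial of a Gr\"obner basis element. Thus the proof is simply a matter of reading off the leading monomials in each case and checking that the stated list of forbidden factors matches.

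Concretely, I would organize the argument case by case. In cases (0), (1a), (2a), and ($r$), the theorem in Subsection 5.1 shows the original generators $\{G_1,\dots,G_{n+1}\}$ already form a Gr\"obner basis, and $\mathrm{LM}(G_i) = x_{n+1}\cdots\widehat{x}_i\cdots x_1$ by Definition \ref{idealgenerators}; these are precisely the monomials $x_{i_1}\cdots x_{i_n}$ with $i_1 > \cdots > i_n$, giving the first stated basis. In case (1b), the Gr\"obner basis is $\{G_1, H_2,\dots,H_n, G_{n+1}, N\}$; here $\mathrm{LM}(N) = x_{n+1}x_1$, while $\mathrm{LM}(G_1), \mathrm{LM}(H_2),\dots,\mathrm{LM}(H_n), \mathrm{LM}(G_{n+1})$ all have strictly decreasing subscripts, so each is of the form $x_{i_1}\cdots x_{i_n}$ with $i_1 > \cdots > i_n$ (one checks that $\mathrm{LM}(H_i) = x_{n+1}\cdots\widehat{x}_i\cdots x_1$, since removing the permutations where $1$ and $n{+}1$ are adjacent does not remove the deglex-largest term). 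In case (2b), the Gr\"obner basis is $\{G_1, G_2, K_3,\dots,K_{n+1}, N\}$, and the leading monomials are $\mathrm{LM}(N) = x_2 x_1$, $\mathrm{LM}(G_1) = x_{n+1}\cdots x_2$, $\mathrm{LM}(G_2) = x_{n+1}\cdots x_3 x_1$, and $\mathrm{LM}(K_3), \dots, \mathrm{LM}(K_{n+1})$ as listed explicitly in Notation \ref{VKnotation}; collecting these gives exactly the forbidden factors in the statement.

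Finally, for the infinite-dimensionality claim, I would note that in every case the set of forbidden factors consists of finitely many monomials each of degree $\ge 2$, so for every symbol $x_j$ the powers $x_j, x_j^2, x_j^3, \dots$ (more simply, the monomials $x_1^k$ for $k \ge 0$, none of which contains any of the listed factors since those all involve at least two distinct indices, except possibly nothing) are all normal words; hence $N(I)$ is infinite and $U(L)$ is infinite-dimensional. Actually it is cleanest to observe that $x_{n+1}^k$ is a normal word for every $k$ in cases (0), (1a), (2a), ($r$), and (1b) since every forbidden factor involves $x_1$ together with a larger index, and that $x_3^k$ (say) works in case (2b) since every forbidden factor there involves $x_1$ or $x_2$; in all cases this exhibits an infinite linearly independent family in $U(L)$.

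The routine part is the bookkeeping of leading monomials; the only place requiring a small argument is verifying that passing from $G_i$ to $H_i$ (resp.\ $K_i$) does not change the leading monomial up to the recorded transposition, which is already addressed in the text preceding each theorem (the deglex-maximal term survives the cancellation of the adjacent-pair permutations, and in the $K_i$ case the extra sign was inserted precisely to record the transposition $x_2 x_1 \mapsto x_1 x_2$ in the leading term). I do not anticipate any genuine obstacle: every ingredient has been proved above, and the corollary is purely a matter of assembling them.
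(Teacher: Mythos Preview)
Your proposal is correct and is exactly the argument the paper has in mind: the corollary is stated without proof because it follows, precisely as you say, by reading off the leading monomials of the Gr\"obner bases found in the three preceding theorems and invoking Proposition \ref{C(I)proposition}, in the same way Corollary \ref{u} followed from Theorem \ref{comproof}. Your verification that $\mathrm{LM}(H_i)=\mathrm{LM}(G_i)$ in case (1b) and your appeal to Notation \ref{VKnotation} for the $\mathrm{LM}(K_i)$ in case (2b) are the only nontrivial checks, and both are handled correctly.

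One small cleanup: your first argument for infinite-dimensionality---that every forbidden factor involves at least two distinct generators, so $x_j^k$ is normal for every $j$ and $k$---is the cleanest and already covers all cases uniformly; the subsequent alternative involving $x_{n+1}^k$ contains a minor slip (in case (1b) the factor $\mathrm{LM}(G_1)=x_{n+1}\cdots x_2$ does not involve $x_1$), though the conclusion there is still valid. I would simply keep the first argument and drop the rest.
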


\begin{corollary}
Let $n \ge 4$ be even and let $F$ be a field of characteristic $\ne 2$. For any
non-simple $(n{+}1)$-dimensional $n$-Lie algebra $L$ the natural map $i\colon L
\to U(L)$ is injective.
\end{corollary}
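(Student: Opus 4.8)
The plan is to deduce the injectivity of $i\colon L \to U(L)$ from the explicit Gr\"obner bases computed in the three cases above, exactly as in the proof of Corollary \ref{simpleinjective} for the simple case. The key point is that in every one of the cases (0), (1a), (2a), ($r$), (1b) and (2b), we have just exhibited a Gr\"obner basis $G$ of the ideal $I = \langle G_1, \dots, G_{n+1} \rangle$, and each of the leading monomials of the elements of $G$ has degree at least $2$. Indeed, in Case 1 the leading monomials are the $\mathrm{LM}(G_i) = x_{n+1}\cdots\widehat{x}_i\cdots x_1$, each of degree $n \ge 4$; in Case 2 we additionally have $\mathrm{LM}(H_i)$ of degree $n$ and $\mathrm{LM}(N) = x_{n+1}x_1$ of degree $2$; in Case 3 we have $\mathrm{LM}(G_1), \mathrm{LM}(G_2)$ of degree $n$, the $\mathrm{LM}(K_i)$ of degree $n$, and $\mathrm{LM}(N) = x_2 x_1$ of degree $2$.

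First I would invoke Theorem \ref{di} to conclude that, since $G$ is a Gr\"obner basis, $\mathrm{LM}(f)$ for every nonzero $f \in I$ has a factor equal to one of the listed leading monomials, hence $\deg \mathrm{LM}(f) \ge 2$, hence $\deg f \ge 2$. Therefore $I \cap \mathrm{span}(x_1, \dots, x_{n+1}) = 0$: a nonzero element of $I$ supported on monomials of degree $\le 1$ cannot exist. Next I would recall that, by Proposition \ref{C(I)proposition}, $F\langle X \rangle = C(I) \oplus I$, so the composite $\mathrm{span}(x_1,\dots,x_{n+1}) \hookrightarrow F\langle X \rangle \to U(L) = F\langle X \rangle / I$ is injective because its kernel is precisely $I \cap \mathrm{span}(x_1,\dots,x_{n+1}) = 0$. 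Since $\phi\colon L \to \mathrm{span}(x_1,\dots,x_{n+1}) \subseteq F\langle X \rangle$ is a linear isomorphism onto its image and $i = \pi \circ \phi$, it follows that $i$ is injective.

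There is essentially no obstacle here; the only thing to be careful about is the bookkeeping needed to confirm that in \emph{each} of the six sub-cases the Gr\"obner basis consists entirely of polynomials whose leading monomials have degree $\ge 2$ — which is immediate from the displayed forms of $G_i$, $H_i$, $K_i$ and $N$ above — and to note that the degree-$2$ generators $N$ that appear in Cases 2 and 3 are themselves of the form $x_a x_b - x_b x_a$ with $a \ne b$, so they too contribute nothing to $I \cap \mathrm{span}(x_1,\dots,x_{n+1})$. One could alternatively phrase the whole argument uniformly: in all cases $I$ is generated by homogeneous-degree-$\ge 2$ elements together with elements whose lowest-degree homogeneous component has degree $\ge 2$, but it is cleaner simply to appeal to the Gr\"obner basis already in hand. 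Thus the proof is a short application of Theorem \ref{di} and Proposition \ref{C(I)proposition}, identical in spirit to Corollary \ref{simpleinjective}.
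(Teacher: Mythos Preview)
Your proposal is correct and is precisely the argument the paper has in mind: the corollary is stated without proof because it follows from the explicit Gr\"obner bases in Cases 1--3 in exactly the same way as Corollary \ref{simpleinjective}, namely all leading monomials have degree $\ge 2$, so $I \cap \mathrm{span}(x_1,\dots,x_{n+1}) = 0$ and the cosets of the $x_i$ are linearly independent in $U(L)$.
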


%%%%%%%%%%%%%%%%%%%%%%%%%%%%%%%%%%%%%%%%%%%%%%%%%%%%%%%%%%%%%%%%%%%%%%%%

\section{Computational results for $n$ odd} \label{oddsection}

In this section we present computational results to illustrate the complexity
of finding a Gr\"obner basis for the ideal $I = \langle G_1, \dots, G_{n+1}
\rangle \subseteq F\langle x_1, \dots, x_{n+1} \rangle$ when $n$ is odd. These
computations were done with the computer algebra system \texttt{Maple}.

We consider the 4-dimensional simple 3-Lie algebra $L_4$; to clarify the
notation in this special case, we write $a, b, c, d$ in place of $x_1, x_2,
x_3, x_4$ for the basis elements. The structure constants are then as follows:
  \[
  [a,b,c] = d, \qquad
  [a,b,d] = -c, \qquad
  [a,c,d] = b, \qquad
  [b,c,d] = -a.
  \]
The original set of ideal generators, which is already self-reduced, is as
follows:
  \allowdisplaybreaks
  \begin{align*}
  G_1
  &=
     dcb
  -  dbc
  -  cdb
  +  cbd
  +  bdc
  -  bcd
  -  a,
  \\
  G_2
  &=
     dca
  -  dac
  -  cda
  +  cad
  +  adc
  -  acd
  +  b,
  \\
  G_3
  &=
     dba
  -  dab
  -  bda
  +  bad
  +  adb
  -  abd
  -  c,
  \\
  G_4
  &=
     cba
  -  cab
  -  bca
  +  bac
  +  acb
  -  abc
  +  d.
  \end{align*}
Noncommutative polynomials will made monic and their terms will be listed in
reverse deglex order so that their leading monomials occur first; sets of
polynomials will be listed in reverse deglex order of their leading monomials.

\subsection{First iteration}

Lemma \ref{com} shows that there is only one composition among $G_1, \dots,
G_4$ and Theorem \ref{Nff} gives its normal form:
  \[
  G_1 a - d G_4
  \xrightarrow{\text{nf}}
  N
  =
  d^2 + c^2 + b^2 + a^2.
  \]
Since $N \ne 0$, we must add $N$ to the set of ideal generators and repeat the
process. The new set of generators, which is already self-reduced, is $\{ G_1,
\, G_2, \, G_3, \, G_4, \, N \}$.

\subsection{Second iteration}

We obtain three new compositions and compute their normal forms:
  \allowdisplaybreaks
  \begin{align*}
  N c b - d G_1
  &\xrightarrow{\text{nf}}
  P_1
  =
     dcdb
  -  dbdc
  -  cdbd
  +  c^3b
  -  c^2bc
  -  cbc^2
  -  cb^3
  -  caba
  +  bdcd
  \\
  &\qquad\qquad
  +  bc^3
  +  bcb^2
  +  b^2cb
  -  b^3c
  +  baca
  +  acab
  -  abac
  +2 da
  -2 ad,
  \\
  N c a - d G_2
  &\xrightarrow{\text{nf}}
  P_2
  =
     dcda
  -  dadc
  -  cdad
  +  c^3a
  -  c^2ac
  -  cac^2
  -  cab^2
  -  ca^3
  +  b^2ca
  \\
  &\qquad\qquad
  -  b^2ac
  +  adcd
  +  ac^3
  +  acb^2
  +  aca^2
  +  a^2ca
  -  a^3c
  -  db
  +  bd,
  \\
  N c b - d G_3
  &\xrightarrow{\text{nf}}
  P_3
  =
     dbda
  -  dadb
  +  cbca
  -  cacb
  -  bdad
  -  bcac
  +  b^3a
  -  b^2ab
  -  bab^2
  \\
  &\qquad\qquad
  -  ba^3
  +  adbd
  +  acbc
  +  ab^3
  +  aba^2
  +  a^2ba
  -  a^3b
  +2 dc
  -2 cd.
  \end{align*}
The new set of generators, which is already self-reduced, is
  \[
  \{ \, P_1, \, P_2, \, P_3, \, G_1, \, G_2, \, G_3, \, G_4, \, N \, \}.
  \]

\subsection{Third iteration}

We obtain five new compositions:
  \begin{alignat*}{3}
  &
  P_1 d a - d c P_3 \xrightarrow{\text{nf}} Q_1,
  &\qquad
  &
  N c d b - d P_1 \xrightarrow{\text{nf}} Q_2,
  &\qquad
  &
  N c d a - d P_2 \xrightarrow{\text{nf}} Q_3,
  \\
  &
  N b d a - d P_3 \xrightarrow{\text{nf}} Q_4,
  &\qquad
  &
  P_1 a - d c G_3 \xrightarrow{\text{nf}} Q_5.
  \end{alignat*}
These compositions have 34, 20, 20, 20, 23 terms respectively; their normal
forms have 178, 35, 33, 56, 6 terms respectively. The simplest new generator is
  \[
  Q_5
  =
     dc^2
  +  db^2
  +  da^2
  -  c^2d
  -  b^2d
  -  a^2d.
  \]
The leading monomials of the others are
  \[
  \mathrm{LM}(Q_1) = dc^2bca, \quad
  \mathrm{LM}(Q_2) = dc^3b, \quad
  \mathrm{LM}(Q_3) = dc^3a, \quad
  \mathrm{LM}(Q_4) = dbc^2a.
  \]
We add these new noncommutative polynomials to the set of generators and obtain
  \[
  \{ \,
  Q_1, \, Q_2, \, Q_3, \, Q_4, \, P_1, \, P_2, \, P_3, \, Q_5, \, G_1, \, G_2, \, G_3, \, G_4, \, N
  \, \}.
  \]
However, this set of the generators is not self-reduced: the leading monomials
of some generators are factors of monomials occurring in other generators.
After performing self-reduction, we find that $Q_2$ and $Q_3$ become 0, and
$Q_1$ and $Q_4$ respectively become $R_1$ and $R_2$ with 187 and 58 terms and
leading monomials $dbc^3a$ and $dbc^2a$. The new self-reduced set of generators
is
  \[
  \{ \,
  R_1, \, R_2, \, P_1, \, P_2, \, P_3, \, Q_5, \, G_1, \, G_2, \, G_3, \, G_4, \, N
  \, \}.
  \]

\subsection{Fourth iteration}

We obtain six new compositions:
  \[
  P_1 c^3 a - d c R_1, \quad
  P_1 c^2 a - d c R_2, \quad
  N b c^3 a - d R_1, \quad
  N b c^2 a - d R_2, \quad
  N c^2 - d Q_5.
  \]
These compositions have 203, 74, 189, 60, 8 terms respectively. This suggests
that the algorithm may not terminate and that the Gr\"obner basis obtained by
this process from the original set of ideal generators may in fact be infinite.

%%%%%%%%%%%%%%%%%%%%%%%%%%%%%%%%%%%%%%%%%%%%%%%%%%%%%%%%%%%%%%%%%%%%%%%%

\section*{Acknowledgements}

This work forms part of the doctoral thesis of the second author, written under
the supervision of the first author. The first author was partially supported
by a Discovery Grant from NSERC, the Natural Sciences and Engineering Research
Council of Canada. The second author was supported by a University Graduate
Scholarship from the University of Saskatchewan.

%%%%%%%%%%%%%%%%%%%%%%%%%%%%%%%%%%%%%%%%%%%%%%%%%%%%%%%%%%%%%%%%%%%%%%%%

\end{document}